%% file: planner-pm10.11_3_.tex
\documentclass[a4paper,12pt]{article}

\usepackage{geometry}
\usepackage{indentfirst}

\usepackage{needspace}
\usepackage{amsmath,amssymb,amsfonts}
\usepackage{mathrsfs}
\usepackage{bm}
\usepackage{bbm}
\usepackage{amsthm}
\theoremstyle{definition}

\newtheorem{The}{Theorem}
\newtheorem{Lem}[The]{Lemma}
\newtheorem{Cor}[The]{Corollary}
\newtheorem{Pro}[The]{Proposition}

\newtheorem{Cla}{Claim}

\usepackage{booktabs,longtable,multirow,tabularx,threeparttable,float}
\usepackage{graphicx,rotating,subcaption,caption,tikz}
\usepackage{placeins}
\usepackage[noend]{algpseudocode}
\usepackage{algorithm}
\usepackage[numbers,sort&compress]{natbib}
\usepackage{enumerate,multicol,pifont,xcolor}
\numberwithin{equation}{section}
\usepackage[colorlinks,linkcolor=blue,citecolor=blue,urlcolor=blue]{hyperref}
\usepackage[normalem]{ulem}
\graphicspath{{figures/}}

\title{Perfect matchings in planar 3-connected graphs:\protect\\
girth five and minimum degree five}
\author{	Xinyu Dai$^{1}$\quad}
\date{{\small
		$^{1}$School of Mathematics and Statistics, Minnan Normal University,
		Zhangzhou, Fujian 363000, P.R. China\\
		[0.5ex]
		E-mails: daixinyu03520@163.com}
	}

\begin{document}
\maketitle

\begin{abstract}
Let $\Phi(G)$ denote the number of perfect matchings of a graph $G$.
Goedgebeur  et al. posted two question about the perfect matchings of  planar 3-connected graphs:   does there exist an infinite family of matchable planar 3-connected graphs of girth 5 with a constant number of perfect matchings? Whether there exist infinitely many planar 3-connected graphs with minimum
degree 5 which have a constant number of perfect matchings.

We present answers to those questions. First, we construct, for every even $n\ge196$, a simple planar
3-connected graph of order $n$ and girth five with exactly $c$ perfect
matchings, where $c$ is a fixed positive integer.
Second, for a simple planar 3-connected graph $G$ with a perfect
matching and minimum degree at least five, we show that
$\Phi(G)\ge2^{\lceil(|V(G)|+20)/22\rceil}$.
This implies that, for each positive integer $t$, only finitely
many such graphs, up to isomorphism, have at most $t$ perfect matchings.

\end{abstract}
\medskip
\noindent\textbf{Keywords.} Perfect matchings; planar graphs;
girth; minimum degree.
\section{Introduction}

All graphs considered in this paper are finite and simple.
For terminology and notation not defined here, we follow Bondy and
Murty~\cite{Bondy2008} for graph theory and Lov\'asz and
Plummer~\cite{Lovasz1986} for matching theory.
For a graph $G$, let $V(G)$ and $E(G)$ denote its vertex set
and edge set, respectively. The \emph{order} of $G$ is $|V(G)|$.
A \emph{matching} of a graph $G$ is a set of edges no two of
which have a common vertex. A matching $M$ of $G$ is a
\emph{perfect matching} if every vertex of $G$ is incident
with exactly one edge of $M$.
A graph is \emph{cubic} if every vertex has degree three.
The \emph{girth} of a graph containing a cycle is the length
of a shortest cycle.
For a graph $G$, let $\Phi(G)$ denote its number of perfect matchings,
and let $\delta(G)$ denote its minimum degree.
Following~\cite{Goedgebeur2026}, a graph is \emph{matchable} if it has
a perfect matching, and an edge of a matchable graph is
\emph{inadmissible} if it belongs to no perfect matching.

For $S\subseteq V(G)$, the graph $G-S$ is obtained by deleting the
vertices in $S$ and all edges incident with them.
For $v\in V(G)$, we write $G-v$ for $G-\{v\}$.
Let $N_G(v)=\{u\in V(G):uv\in E(G)\}$
denote the set of neighbors of $v$ in $G$, and let
$d_G(v)=|N_G(v)|$ denote its degree in $G$.
Let $\partial_G(v)$ denote the set of edges of $G$ incident with $v$.
For graphs $G_1$ and $G_2$, the graph $G_1\cup G_2$ has
vertex set $V(G_1)\cup V(G_2)$ and edge set $E(G_1)\cup E(G_2)$.
For a real number $x$, let $\lfloor x\rfloor$ denote the greatest
integer at most $x$, and let $\lceil x\rceil$ denote the least
integer at least $x$.

For each integer $k\ge3$, Zaks~\cite{Zaks1971} constructs
$k$-connected graphs of arbitrarily large order with exactly
$k!$ perfect matchings. Thus connectivity alone does not force
the number of perfect matchings to grow with the order.
These constructions are nonplanar.
The Lov\'asz--Plummer conjecture asserts that the number of
perfect matchings in a cubic bridgeless graph grows exponentially
with its order. Chudnovsky and Seymour~\cite{Chudnovsky2012}
prove the planar case.
Esperet, Kardo\v{s}, King, Kr\'al' and Norine~\cite{Esperet2011} prove
that every cubic bridgeless graph $G$ has at least
$2^{|V(G)|/3656}$ perfect matchings. Thus, in particular, the number of
perfect matchings in a cubic 3-connected graph cannot remain bounded
as its order tends to infinity. The situation changes when regularity
is not required. Goedgebeur, Jooken, Van den Eede and
Zamfirescu~\cite{Goedgebeur2026} construct infinite families of
matchable planar 3-connected graphs with a constant number of perfect
matchings. Their constructions include graphs of girth three and
girth four. They ask whether the same phenomenon occurs for girth
five~\cite[Problem~1]{Goedgebeur2026}.

\Needspace{5\baselineskip}
\begin{samepage}
We give an affirmative answer by constructing the following family.

\begin{The}\label{thm:main}
	There is a positive integer $c$ such that, for every even
	$N\ge28$, there exists a simple planar 3-connected graph $G_N$
	of girth five with $N+168$ vertices and $\Phi(G_N)=c$.
\end{The}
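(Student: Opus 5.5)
The plan is to build $G_N$ from a fixed planar gadget $H$ of girth five with $168$ vertices, together with a variable "ladder" part $L_N$ on $N$ vertices. The ladder is designed so that every edge joining it to $H$, and certain rungs inside it, are forced to be inadmissible. The standard tool is a parity/barrier argument in the style of \cite{Goedgebeur2026}. One finds a set $S\subseteq V(G_N)$ such that $G_N-S$ has $|S|$ odd components. Every perfect matching must then match $S$ into distinct odd components, and every edge inside $S$, or between $S$ and an even component, is inadmissible. Deleting all inadmissible edges leaves a graph $G_N'$ with $\Phi(G_N')=\Phi(G_N)$. If the construction is arranged so that $G_N'$ is a disjoint union of a fixed piece $F$ (independent of $N$) and pieces that each have exactly one perfect matching, then $\Phi(G_N)=\Phi(F)=:c$, which is independent of $N$.

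Concretely, I would take the variable part to be a planar "pentagonal strip". This is a sequence of $N/2$ vertex pairs $\{x_i,y_i\}$ with $x_iy_i$ a rung, and consecutive pairs joined through paths so that every face is a pentagon or longer. Alternate vertices of the strip are attached to vertices of a set $S$ lying in $H$, so that each strip vertex $x_i$ forms a singleton odd component after $S$ is removed. Equivalently, arrange that the $y_i$ form a barrier set whose neighbours in the strip are only the $x_i$, with $|\{y_i\}|$ equal to the number of resulting odd components. Then every perfect matching contains all rungs $x_iy_i$, so the strip contributes a factor $1$. The fixed gadget of $168$ vertices is used for three purposes: to supply enough attachment points to make the whole graph $3$-connected, to close the strip into a planar embedding (for instance as a cylinder capped by two copies of a dodecahedron-like girth-five cap), and to keep $N\ge 28$ as the threshold at which the strip is long enough for the attachments to avoid short cycles. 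Girth five is checked locally: the strip has no cycles of length below five by design, the caps are girth-five pieces, and every attachment path between the strip and the caps has length at least two where needed.

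The verification then proceeds in four steps. First, check planarity by exhibiting the embedding explicitly. Second, check girth five by a case analysis of cycles through the strip, the caps, and the attachment edges. Third, prove $3$-connectivity. I would argue that removing any two vertices leaves the caps internally connected (they are $3$-connected) and every strip vertex still joined to a cap, since each strip vertex has three internally disjoint routes to the caps. Fourth, count the perfect matchings: apply the barrier argument to remove inadmissible edges, deduce that the rungs are forced, and compute $c=\Phi(F)>0$ for the residual fixed graph $F$. That $c$ is positive would be verified by exhibiting one perfect matching. Its exact value only needs to be independent of $N$, and that independence follows because $F$ does not change with $N$.

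I expect the main obstacle to be reconciling the barrier structure with girth five and $3$-connectivity at the same time. A barrier forcing singleton odd components typically requires degree-$3$ vertices whose neighbours all lie in $S$. Such vertices tend to create $4$-cycles when two of them share two neighbours in $S$. It also tends to make small vertex cuts near the ends of the strip. My first attempt would be to use distinct attachment vertices in $S$ for consecutive strip vertices, spaced out along the cap boundaries so that no two share two neighbours. I would then check the end-of-strip junctions by hand; I suspect this is exactly where the large constant $168$ arises.
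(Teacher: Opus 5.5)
\textbf{There is a genuine gap: the proposal never specifies a graph, and the one mechanism it sketches for forcing the matching of the variable part fails.}

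Your reduction framework is sound, and it is the one the paper uses: a barrier, Lemma~\ref{lem:barrier-even} to discard edges from it to even components, then a product over the pieces. But for an existence statement the construction is the proof. Worse, the mechanism you do sketch for making the variable part contribute a factor $1$ does not work.

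\emph{Barrier inside the gadget.} Suppose $S\subseteq V(H)$ with $|V(H)|=168$. Making each of the $N/2$ vertices $x_i$ a singleton odd component of $G_N-S$ gives $o(G_N-S)\ge N/2>|S|$ for $N>336$. By Tutte's theorem, $G_N$ then has no perfect matching at all. Besides, $x_i$ cannot be isolated in $G_N-S$ while the rung $x_iy_i$ is present and $y_i\notin V(H)$.

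\emph{The $y_i$ in the barrier.} Your ``equivalent'' variant is not equivalent: the barrier now grows with $N$. The barrier property only says that each vertex of $S$ is matched into a distinct odd component. Each $x_i$ has degree at least three with all neighbours in $S$, so it has candidate partners besides $y_i$. Hence ``every perfect matching contains all rungs'' is a uniqueness claim that needs its own proof, and you do not give one.

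\emph{Remaining steps.} Planarity, girth five and $3$-connectivity are likewise deferred to checks on a graph that is never specified.

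\textbf{The missing idea} is to keep the barrier entirely inside the fixed part and make the whole variable part a single \emph{even} component.
\begin{itemize}
\item In the paper, $B$ is the $16$-prism with a dodecahedron spliced in at every other vertex of one $16$-cycle. A colour class $X$ of the prism satisfies $|X|=16=o(B-X)$: the odd components are eight $19$-vertex graphs $D_i-z_i$ and eight isolated vertices $b_{2i}$.
\item The variable part is $L_N=Q_N-U$, with $Q_N$ a fullerene and $|U|=8$. It is joined to $B$ only by eight edges ending at the vertices $a_{2i}\in X$. So $L_N$ is an even component of $G_N-X$, every joining edge is inadmissible, and $\Phi(G_N)=\Phi(B)\Phi(L_N)$.
\item That $\Phi(L_N)=1$ is proved separately via Lemma~\ref{lem:alternating-cycle}. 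Every non-matching neighbour of $2k$ in $L_N$ is smaller than $2k$. So if an $M_N$-alternating cycle met the path part, the matched pair $(2k,2k+1)$ on it with $k$ minimal would lead, through the non-matching edge at $2k$, to a matched pair with smaller index. The six remaining vertices carry only a $5$-cycle, which cannot be alternating.
\item Girth five and $3$-connectivity then follow from the corresponding properties of $Q_N$ and $B$. The eight edges of $Q_N$ into $U$ are replaced by edges to eight distinct vertices of $B$, so no case analysis at the ends of a strip is needed.
\end{itemize}
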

\end{samepage}


To prove Theorem~\ref{thm:main}, we construct an infinite
family of graphs with a unique perfect matching.
We join each graph in this family to a fixed graph by
inadmissible edges.

For every even $n\ge24$, Goedgebeur et al.~\cite[Theorem~2(v)]{Goedgebeur2026}
construct a planar 3-connected graph of order $n$ and minimum
degree four with exactly $768$ perfect matchings.
Goedgebeur et al.~\cite[Section~5]{Goedgebeur2026} also ask whether
infinitely many planar 3-connected graphs of minimum degree five
have a constant number of perfect matchings. We consider this
question for a positive constant.

\begin{The}\label{thm:degree-five}
Let $G$ be a matchable simple planar 3-connected graph with
$\delta(G)\ge5$. Then $\Phi(G)\ge 2^{\lceil(|V(G)|+20)/22\rceil}$.
\end{The}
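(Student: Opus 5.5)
The plan is to fix one perfect matching $M$ of $G$ and produce many pairwise vertex-disjoint $M$-alternating cycles. If $C_1,\dots,C_k$ are vertex-disjoint $M$-alternating cycles, then for every $S\subseteq\{1,\dots,k\}$ the symmetric difference $M\triangle\bigcup_{i\in S}E(C_i)$ is a perfect matching, and distinct $S$ give distinct matchings. Hence $\Phi(G)\ge 2^k$, and it suffices to find $k\ge\lceil(|V(G)|+20)/22\rceil$ such cycles.

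To find them, I encode alternating cycles as directed cycles. Let $H$ be the graph obtained from $G$ by contracting every edge of $M$, so $H$ has $n/2$ vertices, where $n=|V(G)|$. $H$ is a planar multigraph: it may have parallel edges but no loops, since $G$ is simple. Let $D$ be the digraph on $V(G)$ with an arc $u\to M(v)$ for each non-matching edge $uv$, where $M(v)$ denotes the partner of $v$. Directed cycles of $D$ correspond exactly to $M$-alternating cycles of $G$, and vertex-disjoint directed cycles give vertex-disjoint alternating cycles. Since $\delta(G)\ge5$, every vertex of $D$ has out-degree and in-degree at least $4$. Planarity of $G$ bounds the number of arcs: $|E(G)|\le 3n-6$, hence $|A(D)|\le 2(3n-6-n/2)$. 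Combined with the minimum-degree bound $|E(G)|\ge 5n/2$, this forces $n\ge 12$ and leaves only a bounded "excess" of arcs. We use $3$-connectivity only to guarantee that $G$ is simple and that faces are genuine cycles, so that Euler's formula applies cleanly.

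The core step is a packing lemma, proved greedily with a charging argument. Repeatedly choose a shortest directed cycle $C$ in the current digraph, add it to the packing, and delete $V(C)$. Then also delete, iteratively, every vertex that no longer lies on any directed cycle, i.e.\ everything outside the nontrivial strong components. The aim is to show that each round removes at most $22$ vertices on average, with an additive gain of $20$ coming from Euler's formula. In a planar setting with all out-degrees at least $4$ this is plausible: discharging on the faces of $G$ should yield a short alternating cycle, for instance one through two adjacent faces of small length, and deleting $V(C)$ lowers out-degrees only locally. The inequality I want is a potential function of the form $\Psi=n-22k+(\text{excess terms from } 3n-6-|E|)$, shown to be non-increasing across rounds. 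Its initial value, together with $n\ge12$ and the slack of $6$ in Euler's bound, then gives $k\ge(n+20)/22$. Since the bound must hold even when the packing ends after few rounds, small cases ($n$ near $12$, e.g.\ the icosahedron) need a separate check that at least two disjoint alternating cycles exist.

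I expect the main obstacle to be controlling the damage of a deletion. After removing a short cycle, the remaining digraph may lose the out-degree-$4$ property near $C$, so neighbouring vertices can drop out of all directed cycles and cascade. My first attempt is to bound the cascade by planarity: vertices that lose out-arcs are those adjacent to $C$. By a face-counting argument, only a bounded number of them can lose all of their out-arcs, because losing $4$ arcs to a short cycle requires many triangular faces around it. I would then charge the lost vertices to $C$ and tune the accounting so that at most $22$ vertices are charged per cycle. If the pure greedy approach fails, the fallback is to replace "shortest cycle" with "a cycle inside a terminal strong component of minimum size". This keeps all deletions inside one component and prevents cascades into the rest of $D$.
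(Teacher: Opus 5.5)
Your first reduction is correct: $k$ vertex-disjoint $M$-alternating cycles give $\Phi(G)\ge 2^k$. The gap is everything after it.

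\textbf{The packing lemma is not proved.} It carries the whole content of the theorem, but you only describe it as an aim (``plausible'', ``should yield'', ``the inequality I want'', a fallback if greedy fails). There is no argument that an arbitrary $M$ admits a short alternating cycle, that the cascade after deleting it is bounded, or where $22$ and $20$ come from. The premise of a bounded excess is also false: $3n-6-|E(G)|=n/2-6$ when $G$ is $5$-regular.

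\textbf{The target is stronger than the theorem.} $\Phi(G)$ counts all families of pairwise disjoint $M$-alternating cycles, so many perfect matchings are compatible with a small maximum packing. Even the paper's structural bound does not reach your constant. If $W$ is the vertex set of a maximal packing, then $G-W$ has a unique perfect matching, so $|V(G)|\le 11|W|-20$. Since each $|V(C_i)|\ge4$, this only gives $k\ge(|V(G)|+20)/44$.

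\textbf{The encoding is wrong for non-bipartite graphs.} $G$ is non-bipartite because $|E(G)|\ge 5n/2>2n-4$. A directed cycle of $D$ may contain both $u$ and $M(u)$. Its closed walk then uses $uM(u)$ twice and may contain no alternating cycle at all (a blossom). Also, two vertex-disjoint directed cycles of $D$, one through $u$ and one through $M(u)$, both use the edge $uM(u)$ of $G$.

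\textbf{The missing idea.} You do not need disjoint cycles; it is cheaper to delete one admissible edge, i.e.\ two vertices, per factor of two. If $\Phi(G)=p>1$, some vertex $v$ has two admissible edges. The one, $uv$, lying in the fewest perfect matchings satisfies $\Phi(G-u-v)\le p/2$. Iterating gives $W$ with $|W|\le 2\lfloor\log_2 p\rfloor$ such that $G-W$ has a unique perfect matching.

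The substantive step is then $|V(G)|\le 11|W|-20$. The paper proves it by induction in a more general setting with marked faces:
\begin{enumerate}
\item By Kotzig's theorem, the unique perfect matching of $I=G-W$ contains a bridge $uv$ of $I$.
\item 3-connectivity makes facial cycles chordless, so the faces meeting $W$ can be triangulated. Then $uv$ lies on triangular faces $uvx$, $uvy$ with $x\neq y$ outside $I$.
\item Deleting $uv$ and splitting $x$, $y$ yields smaller instances.
\item Euler's formula gives $q\le 2b+10h+10r-20$.
\end{enumerate}
Together these give $|V(G)|\le 22\lfloor\log_2 p\rfloor-20$, which is the theorem.
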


\begin{Cor}\label{cor:finite-family}
For every positive integer $c$, 
there are only finitely many
  simple planar 3-connected graphs $G$
with $\delta(G)\ge5$ and $1\le\Phi(G)\le c$.
\end{Cor}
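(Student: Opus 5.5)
The statement is Corollary~\ref{cor:finite-family}, and I will derive it directly from Theorem~\ref{thm:degree-five}. Fix a positive integer $c$ and let $G$ be a simple planar 3-connected graph with $\delta(G)\ge5$ and $1\le\Phi(G)\le c$.

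Since $\Phi(G)\ge1$, the graph $G$ is matchable, so Theorem~\ref{thm:degree-five} applies. It gives
\[
2^{\lceil(|V(G)|+20)/22\rceil}\le\Phi(G)\le c .
\]
Taking logarithms yields $(|V(G)|+20)/22\le\lceil(|V(G)|+20)/22\rceil\le\log_2 c$. Hence $|V(G)|\le 22\log_2 c-20$.

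The order of every graph in the family is therefore bounded by a constant depending only on $c$. For each $n$, there are only finitely many simple graphs on $n$ vertices up to isomorphism, at most $2^{\binom n2}$. Summing over the finitely many admissible $n$ gives finiteness.

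The only substantive ingredient is Theorem~\ref{thm:degree-five}; the deduction itself has no real obstacle. The one point to check is that the hypothesis $\Phi(G)\ge1$ is what supplies matchability. Without it, the infinitely many non-matchable graphs with $\Phi(G)=0$ would also satisfy $\Phi(G)\le c$, and the family would not be finite. This is exactly why the lower bound $1\le\Phi(G)$ appears in the statement.
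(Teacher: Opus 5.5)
Your proposal is correct and follows the paper's own argument. Both apply Theorem~\ref{thm:degree-five} to bound the order by a constant depending only on $c$, then use that there are finitely many isomorphism classes of simple graphs of bounded order. The paper writes the bound as $|V(G)|\le 22\lfloor\log_2 c\rfloor-20$, using that the ceiling is an integer; your $22\log_2 c-20$ serves equally well.
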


%

To prove Theorem~\ref{thm:degree-five}, we first find a set
$W\subseteq V(G)$ such that $G-W$ has a unique perfect
matching. We then use the planarity and 3-connectivity
of $G$ and the condition $\delta(G)\ge5$ to obtain an
upper bound on $|V(G)|$ depending only on $|W|$.

Section~\ref{sec:girth-five} gives the girth-five construction and
proves Theorem~\ref{thm:main}. In Section~\ref{sec:degree-five}, we first prove lemmas
on unique perfect matchings.
We then use these lemmas to prove
Theorem~\ref{thm:degree-five} and
Corollary~\ref{cor:finite-family} for planar 3-connected
graphs of minimum degree at least five.

\section{Planar 3-connected graphs of girth five}\label{sec:girth-five}

\subsection{Perfect matchings and splicing}

\begin{Lem}[Petersen~\cite{Petersen1891}]\label{lem:petersen}
Every bridgeless cubic graph has a perfect matching.
\end{Lem}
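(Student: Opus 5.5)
The natural route is through Tutte's $1$-factor theorem: a graph has a perfect matching if and only if, for every $S\subseteq V(G)$, the number $o(G-S)$ of odd components of $G-S$ is at most $|S|$. So for a bridgeless cubic graph $G$ we fix an arbitrary $S\subseteq V(G)$ and aim to show $o(G-S)\le |S|$. (If $G$ is disconnected we may work in each component, since each component is itself bridgeless and cubic; the counting below is global anyway.)

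The core step is an edge count across each odd component. Let $C$ be an odd component of $G-S$, and let $m_C$ be the number of edges joining $C$ to $S$. Summing degrees inside $C$ gives $3|V(C)| = 2|E(C)| + m_C$. Since $|V(C)|$ is odd, $m_C$ is odd. Now $m_C\neq 1$: the edges from $C$ to $S$ form the whole edge cut $\partial(V(C))$, because $C$ is a component of $G-S$ and so has no edges to other components. A single cut edge would be a bridge, which is excluded. Hence $m_C\ge 3$.

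Summing over all odd components, the total number of edges between $S$ and the odd components is at least $3\,o(G-S)$. It is also at most $\sum_{v\in S} d_G(v)=3|S|$. Therefore $o(G-S)\le |S|$, and Tutte's theorem yields a perfect matching.

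The only delicate point is the parity-and-cut argument showing $m_C\ge3$. One has to note that the cut around $C$ consists exactly of the edges to $S$. Odd parity rules out $m_C=0$, which is what lets the argument go through even when $G$ is disconnected; bridgelessness rules out $m_C=1$. Everything else is routine counting, and the paper may simply cite Petersen's theorem without proof.
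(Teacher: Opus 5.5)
Your proof is correct and complete. The parity identity $3|V(C)|=2|E(C)|+m_C$ makes $m_C$ odd for every odd component $C$ of $G-S$. Since $C$ is a component of $G-S$, the cut $\partial(V(C))$ consists exactly of the $m_C$ edges to $S$, so bridgelessness excludes $m_C=1$ and hence $m_C\ge3$. Parity also covers $S=\varnothing$: it forces $o(G)=0$ there. Comparing $3\,o(G-S)$ with $\sum_{v\in S}d_G(v)=3|S|$ then gives Tutte's condition.

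The paper gives no proof of this lemma; it quotes it as a classical result with a citation to Petersen. So there is no in-paper argument to compare against. What you wrote is the standard textbook derivation from Tutte's $1$-factor theorem. Petersen's original argument predates Tutte's theorem by more than half a century and is considerably more involved. Your route is much shorter but relies on the deeper theorem. It also fits naturally with the paper's own use of the condition $o(G-S)=|S|$ in the barrier argument of Lemma~\ref{lem:barrier-even}.
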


For a graph $G$, let $F(G)$ be the graph with vertex set
$V(F(G))=V(G)$ and edge set consisting of all edges of $G$
that belong to at least one perfect matching of $G$.

\begin{Lem}[{\cite[Section~2.2, Fact]{Goedgebeur2026}}]\label{lem:matching-product}
Let $G$ be a matchable graph, and let $C_1,\ldots,C_k$ be
the connected components of $F(G)$.
Then $\Phi(G)=\prod_{i=1}^{k}\Phi(C_i)$.
\end{Lem}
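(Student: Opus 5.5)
The plan is to reduce $\Phi(G)$ to a product over the components $C_1,\ldots,C_k$ of $F(G)$, the subgraph of edges that lie in some perfect matching, and to prove one identity per component. First I will show that every perfect matching $M$ of $G$ is a subset of $E(F(G))$. Indeed, each edge of $M$ lies in a perfect matching, namely $M$ itself, so it is an edge of $F(G)$. Since $V(F(G))=V(G)$, each component $C_i$ is a subgraph spanning its own vertex set. The restriction $M\cap E(C_i)$ covers every vertex of $C_i$: the $M$-edge at a vertex $v\in V(C_i)$ is an edge of $F(G)$ incident with $v$, so it lies in the component containing $v$. Hence $M\cap E(C_i)$ is a perfect matching of $C_i$, and the map
\[
M\longmapsto \bigl(M\cap E(C_1),\ldots,M\cap E(C_k)\bigr)
\]
sends perfect matchings of $G$ to $k$-tuples of perfect matchings of the $C_i$. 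This map is injective, because $M$ is the disjoint union of its restrictions.

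Next I will show the map is surjective. Take perfect matchings $M_i$ of $C_i$ for each $i$. The $C_i$ have pairwise disjoint vertex sets that together partition $V(G)$. So $\bigcup_i M_i$ is a set of edges of $G$ in which every vertex of $G$ is covered exactly once. That makes it a perfect matching of $G$, and its restriction to each $C_i$ is exactly $M_i$. This gives a bijection, and therefore $\Phi(G)=\prod_{i=1}^k\Phi(C_i)$.

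I must also check that each $\Phi(C_i)\ge1$, so that the statement is not vacuous or degenerate. This follows because $G$ is matchable: any perfect matching restricts to one on each $C_i$. An isolated vertex of $F(G)$ cannot occur, since every vertex is covered by some perfect matching.

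No step is a real obstacle. The only point needing care is the observation that an $M$-edge at a vertex of $C_i$ stays inside $C_i$. This holds because that edge is itself an edge of $F(G)$, so its endpoints are connected in $F(G)$ and hence lie in the same component.
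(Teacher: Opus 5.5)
Your proof is correct. The paper gives no proof of its own and cites the lemma as a fact from Goedgebeur et al.; your restriction bijection $M\mapsto(M\cap E(C_1),\ldots,M\cap E(C_k))$ is the standard argument behind that fact. Your extra check that each $\Phi(C_i)\ge1$ is harmless but unnecessary, because the bijection never uses matchability: if a nonempty $G$ has no perfect matching, then $F(G)$ is edgeless and both sides equal $0$.
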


A component is \emph{odd} if it has an odd number of vertices,
and \emph{even} if it has an even number of vertices.
Let $o(G)$ denote the number of odd components of
$G$. A nonempty set $S\subseteq V(G)$ is a \emph{barrier} of a
matchable graph $G$ if $o(G-S)=|S|$. We use the following
property of barriers.

\begin{Lem}[{\cite[proof of Lemma~1.6]{Carvalho2018}}]\label{lem:barrier-even}
Let $S$ be a barrier of a matchable graph $G$, and let $H$
be an even component of $G-S$. Then every edge joining a
vertex of $S$ to a vertex of $H$ is inadmissible in $G$.
\end{Lem}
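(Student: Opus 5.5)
The plan is to use the Gallai--Edmonds/Tutte structure and the fact that the barrier $S$ is maximal with respect to the parity count. Let $M$ be any perfect matching of $G$ and let $e=sh$ be an edge with $s\in S$ and $h\in V(H)$, where $H$ is an even component of $G-S$. The key is to count how $M$ must behave on the components of $G-S$. I will show that $e\notin M$.

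First I record the counting fact. Each odd component $K$ of $G-S$ cannot be perfectly matched internally, because it has an odd number of vertices. So at least one vertex of $K$ is matched by $M$ to a vertex outside $K$. Components of $G-S$ are not joined to each other, so that partner lies in $S$. Distinct odd components therefore use distinct vertices of $S$ as partners, since $M$ is a matching. This gives an injection from the set of odd components into $S$. Because $o(G-S)=|S|$, this injection is a bijection. Hence every vertex of $S$ is matched by $M$ into some odd component of $G-S$.

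Next I apply this to $e$. If $e=sh$ were in $M$, then $s$ would be matched into the even component $H$, not into an odd component. That contradicts the bijection just established. So no perfect matching contains $e$, which means $e$ is inadmissible.

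The only step requiring care is the injectivity and surjectivity argument. I must make sure each odd component sends at least one $M$-edge into $S$, which follows from parity. I must also make sure that no vertex of $S$ is left for an even component, which follows from $|S|=o(G-S)$ together with the fact that each vertex of $S$ has exactly one $M$-partner. I do not expect a genuine obstacle; the argument is a direct parity count.
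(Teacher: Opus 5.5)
Your proof is correct. It is the standard parity-counting argument behind the cited result; the paper gives no proof of its own and defers to the proof of Lemma~1.6 in \cite{Carvalho2018}. Each odd component of $G-S$ has its own $M$-partner in $S$, so $o(G-S)=|S|$ forces every vertex of $S$ to be matched into an odd component, and no vertex of $S$ can be matched into an even component. Your opening mention of Gallai--Edmonds and of maximality of $S$ is unnecessary, since the argument uses only the equality $o(G-S)=|S|$.
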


Let $M$ be a matching of a graph $G$. A cycle of $G$ is
\emph{$M$-alternating} if its edges belong alternately to $M$
and to $E(G)\setminus M$.

\begin{Lem}[{\cite[Lemma~2.2]{Nguyen2018}}]\label{lem:alternating-cycle}
Let $M$ be a perfect matching of a graph $G$. Then $M$ is the
unique perfect matching of $G$ if and only if $G$ contains no
$M$-alternating cycle.
\end{Lem}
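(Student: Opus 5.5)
The statement to prove is Lemma~\ref{lem:alternating-cycle}: a perfect matching $M$ of $G$ is unique if and only if $G$ has no $M$-alternating cycle. The plan is to prove both implications via the symmetric difference of two perfect matchings.

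\emph{Alternating cycle gives a second matching.} Suppose $C$ is an $M$-alternating cycle. Since its edges alternate between $M$ and $E(G)\setminus M$, $C$ has even length. Put $M'=M\triangle E(C)$. Each vertex of $C$ is incident with exactly one edge of $M\cap E(C)$ and exactly one edge of $E(C)\setminus M$. Hence in $M'$ it loses its $M$-edge on $C$ and gains the other edge of $C$ at it. Vertices off $C$ keep their $M$-edges, because no $M$-edge joins a vertex of $C$ to a vertex outside $C$. So $M'$ is a perfect matching, and $M'\neq M$ since $E(C)\neq\emptyset$.

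\emph{Second matching gives an alternating cycle.} Suppose $M'\neq M$ is another perfect matching, and consider the spanning subgraph $H$ with edge set $M\triangle M'$.
\begin{itemize}
\item Each vertex is covered exactly once by $M$ and once by $M'$. If these two edges coincide, the vertex has degree $0$ in $H$; otherwise it has degree $2$, with one edge from $M$ and one from $M'$.
\item So $H$ is a disjoint union of cycles (plus isolated vertices), and each cycle alternates between $M\setminus M'$ and $M'\setminus M\subseteq E(G)\setminus M$.
\item A $2$-cycle would need a repeated edge, which is impossible in a simple graph. Hence each such cycle is a genuine cycle of length at least $4$.
\item Since $M\neq M'$, the graph $H$ has an edge, so at least one cycle exists, and it is $M$-alternating.
\end{itemize}

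No step is a real obstacle. The only points to state carefully are that alternating cycles are even, and that in $M\triangle E(C)$ the vertices of $C$ are not affected by $M$-edges outside $C$.
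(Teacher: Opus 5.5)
The paper gives no proof of this lemma and simply imports it by citation, so there is no internal argument to compare with. Your argument is the standard one (switch $M$ along an alternating cycle for one direction, and decompose $M\triangle M'$ into $M$-alternating cycles for the other), and it is correct and complete as written, including your observations that alternation forces even length, that each vertex of $C$ has its $M$-edge on $C$ so $M\triangle E(C)$ is perfect, and that simplicity rules out $2$-cycles in $M\triangle M'$.
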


Let $G_1$ and $G_2$ be vertex-disjoint graphs, and let
$x\in V(G_1)$ and $y\in V(G_2)$ have the same degree.
Let $\theta:\partial_{G_2}(y)\to\partial_{G_1}(x)$
be a bijection.
The \emph{splice} $G_1(x)\odot G_2(y)$ is the graph
obtained from $G_1-x$ and $G_2-y$ by adding the edge $uv$
for each $yv\in\partial_{G_2}(y)$ with $\theta(yv)=xu$.
The added edges form the \emph{splicing cut}.
When $G_1$ and $G_2$ are plane graphs, we choose $\theta$
so that the added edges can be drawn without crossings.
For a nonempty set $S\subseteq V(G)$ that induces a connected
subgraph, the \emph{contraction} $G/S$ is the graph obtained by
identifying all vertices of $S$ as one vertex, deleting loops
and replacing parallel edges by single edges.

\begin{Lem}\label{lem:splice}
	Let $G$ and $H$ be vertex-disjoint simple 3-connected graphs,
	and let $u\in V(G)$ and $v\in V(H)$ have the same degree.
	Then $G(u)\odot H(v)$ is 3-connected.
\end{Lem}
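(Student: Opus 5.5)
We prove 3-connectivity through Menger's theorem: we show that any two vertices of $K=G(u)\odot H(v)$ are joined by three internally disjoint paths. Equivalently, we show that $K-\{a,b\}$ is connected for every pair $a,b\in V(K)$.

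Let $d=d_G(u)=d_H(v)$. Since $G$ and $H$ are 3-connected and simple, $d\ge3$. Write $N_G(u)=\{u_1,\dots,u_d\}$ and $N_H(v)=\{v_1,\dots,v_d\}$, indexed so that the splicing cut consists of the edges $u_iv_i$. These $d$ edges form a matching. Every vertex of $K$ lies in $G-u$ or in $H-v$. Both $G-u$ and $H-v$ are 2-connected, because deleting one vertex from a 3-connected graph leaves a 2-connected graph. In particular, $|V(K)|\ge 2d\ge 6$, so $K$ has more than three vertices.

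Fix $a,b\in V(K)$; we show that $K-\{a,b\}$ is connected. There are three cases.

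\emph{Case 1: both $a$ and $b$ lie in $G-u$.} The graph $H-v$ is untouched and connected. Consider a vertex $x\in V(G-u)\setminus\{a,b\}$. Since $G$ is 3-connected, $G-\{a,b\}$ is connected, so it contains a path from $x$ to $u$. Let $u_i$ be the last vertex of this path before $u$. The part of the path from $x$ to $u_i$ avoids $a$, $b$ and $u$. The edge $u_iv_i$ then joins $x$ to $H-v$. Hence every remaining vertex of $G-u$ reaches the connected graph $H-v$, and $K-\{a,b\}$ is connected.

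\emph{Case 2: both $a$ and $b$ lie in $H-v$.} This is symmetric to Case 1.

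\emph{Case 3: $a\in V(G-u)$ and $b\in V(H-v)$.} The graphs $G-u-a$ and $H-v-b$ are connected, since $G-u$ and $H-v$ are 2-connected. It therefore suffices to find one cut edge $u_iv_i$ with $u_i\neq a$ and $v_i\neq b$. At most one index is blocked by $a$, namely when $a=u_i$, and at most one by $b$, namely when $b=v_j$. Because $d\ge3$, some index remains, and that edge connects the two pieces.

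The only delicate step is Case 1: when $a$ and $b$ are removed from the same side, the remaining part of $G-u$ may be disconnected. What saves us is that each of its pieces still has a neighbour of $u$, since $G-\{a,b\}$ is connected. So each piece reaches $H-v$ through a cut edge whose $H$-endpoint was not deleted.

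Two further remarks. Simplicity guarantees that the $u_i$ are distinct, and likewise the $v_i$, so the cut really is a matching of size $d$. Planarity is not needed for this lemma; it is used only in the paper's choice of $\theta$ for plane graphs.
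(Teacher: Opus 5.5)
Your proof is correct and takes the same route as the paper. Both split on whether the two deleted vertices lie on the same side of the splicing cut or on opposite sides, and both handle the opposite-side case identically, using that the cut is a matching of at least three edges. Your same-side argument, which routes each surviving vertex of $G-u$ along a path to $u$ in $G-\{a,b\}$ and then across a cut edge, is an explicit version of the paper's observation that contracting $V(H-v)$ turns $K-\{a,b\}$ into $G-\{a,b\}$.
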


\begin{proof}
	Set $G^*=G(u)\odot H(v)$, $G'=G-u$, and $H'=H-v$.
	Suppose, to the contrary, that $G^*-S$ is disconnected
	for some $S\subseteq V(G^*)$ with $|S|\le2$.
	Since $G$ and $H$ are 3-connected, both $G'$ and $H'$
	are connected.
	Suppose first that $S\subseteq V(G')$.
	Then $H'$ remains connected in $G^*-S$, so all its
	vertices lie in one component of $G^*-S$.
	Contracting $V(H')$ to a single vertex cannot join
	distinct components. Thus $(G^*-S)/V(H')$ is disconnected.
	By the definition of splicing, this graph is isomorphic
	to $G-S$, where the contraction vertex corresponds to $u$.
	This contradicts the 3-connectivity of $G$.
	Therefore, $S\cap V(H')\ne\varnothing$.
	By symmetry, $S\cap V(G')\ne\varnothing$.
	Since $|S|\le2$, we have $S=\{a,b\}$, where
	$a\in V(G')$ and $b\in V(H')$.
	The graphs $G'-a=G-u-a$ and $H'-b=H-v-b$ are connected
	by the 3-connectivity of $G$ and $H$.
	Since $G$ and $H$ are simple, the edges joining $G'$
	and $H'$ form a matching of at least three edges.
	Deleting $a$ and $b$ removes at most two of these edges,
	so at least one edge remains between $G'-a$ and $H'-b$.
	Hence $G^*-S$ is connected, a contradiction.
	Therefore, $G^*$ is 3-connected.
\end{proof}

\subsection{The construction of \texorpdfstring{$Q_N$ and $L_N$}{Q(N) and L(N)}}
\label{sec:source-construction}

\paragraph{The construction of \texorpdfstring{$Q_N$}{Q(N)}.}
Let $t$ be a nonnegative integer and set $N=28+2t$.
We write $(i,j)$ for the edge joining vertices $i$ and $j$.
Put $U=\{0,1,\ldots,7\}$.
Let $A$ be the graph with vertex set $V(A)=U$ and edge set
$E(A)=\{(0,1),\allowbreak(1,2),\allowbreak(2,3),\allowbreak
(3,4),\allowbreak(4,0),\allowbreak(2,5),\allowbreak
(5,6),\allowbreak(6,7)\}$.

Let $S_t$ be the graph with vertex set
$V(S_t)=\{0,1,\ldots,21+2t\}$ and edge set
$E(S_t)=E(A)
\cup\{(i,i+1):8\le i\le20+2t\}
\cup\{(0,10),\allowbreak(1,8),\allowbreak(3,14),\allowbreak
(4,12),\allowbreak(5,16),\allowbreak(6,18),\allowbreak
(7,8),\allowbreak(7,20)\}
\cup\{(9+2j,22+2j):0\le j\le t-1\}$.
The graphs $S_0$ and $S_1$ are shown in
Figure~\ref{fig:q30-construction}(a) and (b), respectively.

Put $p=21+2t$ and $p_i=7+2t+2i$ for $1\le i\le6$.
In $S_t$, the vertex $p$ has degree one, each $p_i$ has
degree two, and every other vertex has degree three.
Put $z=N-6$, $q_1=N-5$, $q_2=N-2$, $q_3=N-1$,
$q_4=N-3$, and $q_5=N-4$.
Let $Q_N$ be the graph with vertex set
$V(Q_N)=V(S_t)\cup\{z,q_1,\ldots,q_5\}$ and edge set
$E(Q_N)=E(S_t)
\cup\{q_iq_{i+1}:1\le i\le4\}
\cup\{q_5q_1,zq_1,pp_1,pz,zp_2\}
\cup\{p_iq_{i-1}:3\le i\le6\}$.
Figure~\ref{fig:q30-construction}(c) shows the added
vertices and edges.
Thus $|V(Q_N)|=N$, and $Q_N$ is cubic.

A \emph{fullerene graph} is a cubic 3-connected plane graph
whose faces have length five or six.
The graph $Q_N$ is the fullerene with parameter $k=t+14$
in the construction of Yang, Zhang and
Lin~\cite[proof of Theorem~4.4]{Yang2015}.
Hence $Q_N$ is planar and 3-connected and has girth five.
Figure~\ref{fig:q30-construction}(d) shows $Q_{30}$.

\begin{figure}[H]
\centering
\includegraphics[width=.90\textwidth]{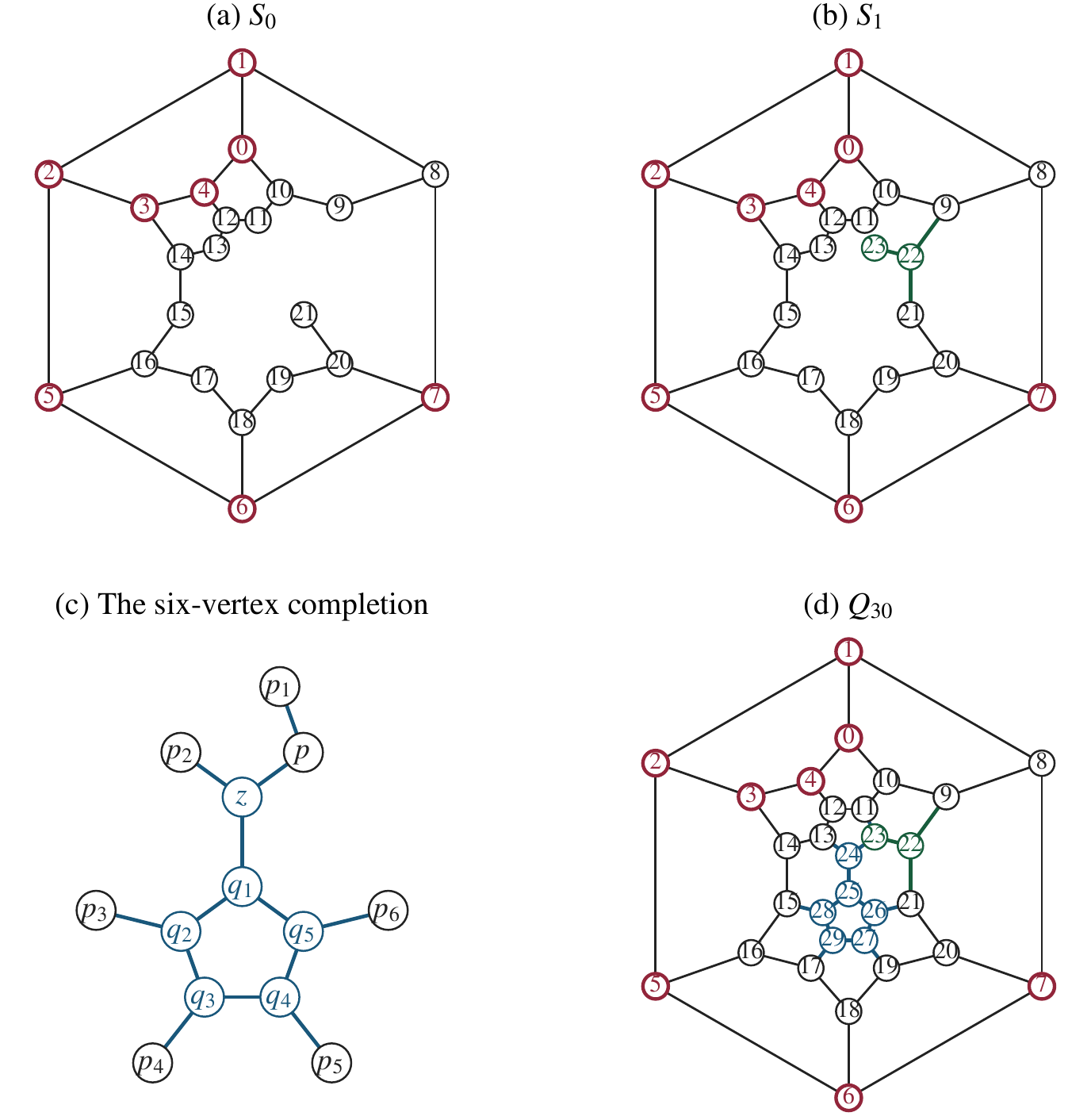}
\caption{The construction of $Q_{30}$. Panels (a), (b) and (d)
show $S_0$, $S_1$ and $Q_{30}$ in the same plane drawing.
Red vertices with thick outlines form $U$. Green vertices and edges are added in
the operation from $S_0$ to $S_1$; blue vertices and edges are
added when completing $S_1$. Panel (c) enlarges the latter
addition, using the general notation $p,p_1,\ldots,p_6,q_1,\ldots,q_5,z$;
all existing edges of $S_1$ are omitted from that panel.}
\label{fig:q30-construction}
\end{figure}


\paragraph{The construction of \texorpdfstring{$L_N$}{L(N)}.}
Let $L_N=Q_N-U$. Then $V(L_N)=\{8,9,\ldots,N-1\}$ and
$E(L_N)=\{(i,j)\in E(Q_N):i,j\notin U\}$.
Thus $|V(L_N)|=N-8=20+2t$.
The graph $L_{28}$ has $20$ vertices and has the smallest
order in this family.
As $t$ increases, $|V(L_N)|$ tends to infinity.
Figure~\ref{fig:source} shows $L_{30}=Q_{30}-U$.

\begin{figure}[H]
\centering
\includegraphics[width=.55\textwidth]{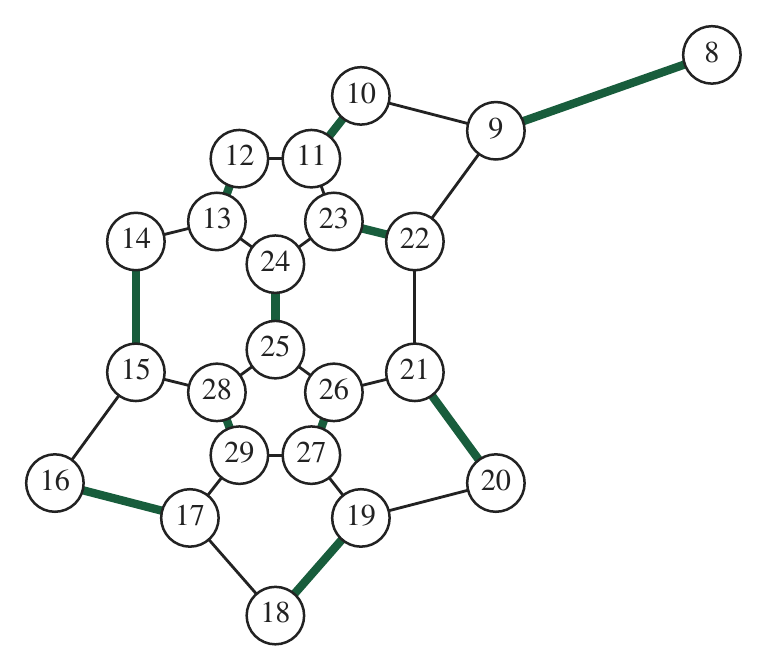}
\caption{The graph $L_{30}=Q_{30}-U$, with the vertex labels
inherited from Figure~\ref{fig:q30-construction}(d).
The thick green edges form the
unique perfect matching of $L_{30}$.}
\label{fig:source}
\end{figure}

\begin{Lem}\label{lem:unique-LN}
	For every even integer $N\ge28$, the graph $L_N$ has
	a unique perfect matching.
\end{Lem}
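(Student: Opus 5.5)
The plan is to use a forced-edge (leaf-peeling) argument. At each step I find a vertex of degree one in the current graph, put its unique edge into the matching, and delete both endpoints. If the process exhausts $V(L_N)$, the chosen edges form a perfect matching $M$, and $M$ is the only one, since every edge was forced. Equivalently, one could verify via Lemma~\ref{lem:alternating-cycle} that $L_N$ has no $M$-alternating cycle, but the peeling argument already gives uniqueness directly. Write $N=28+2t$.

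\textbf{Degrees in $L_N$.} First I record how deleting $U$ changes degrees. In $S_t$ the edges into $U$ from outside $U$ are $(1,8),(7,8),(0,10),(4,12),(3,14),(5,16),(6,18),(7,20)$.
\begin{itemize}
\item Vertex $8$ loses two neighbours and has degree one in $L_N$, with only neighbour $9$.
\item Each of $10,12,14,16,18,20$ keeps only its two path neighbours.
\end{itemize}

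\textbf{Peeling along the path.}
\begin{itemize}
\item Start with $(8,9)$. After deleting $8,9$, vertex $10$ has only neighbour $11$, forcing $(10,11)$. Repeating this forces $(12,13)$, $(14,15)$, $(16,17)$, $(18,19)$, $(20,21)$ in turn.
\item For $t\ge1$, argue by induction on $j=0,\dots,t-1$ that $(22+2j,23+2j)$ is forced. The neighbours of $22+2j$ are $21+2j$, $23+2j$ and $9+2j$. The vertex $9+2j$ was matched in the first group, and $21+2j$ was matched in the previous pair. So $22+2j$ has degree one at that moment.
\item Vertex $23+2j$ is a path vertex whose only other possible neighbours in $Q_N$ lie among $p_i,p,z$ for the last indices. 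I need to check from the definition that these cause no interference. Here the parity works: $p=21+2t$ is the odd end of the final pair $(20+2t,21+2t)$.
\end{itemize}

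\textbf{Finishing on the added vertices.} After the path $8,\dots,21+2t$ is exhausted, the remaining vertices are $z,q_1,\dots,q_5$. Every $p_i$ lies on the path, hence is already deleted. The neighbours of $z$ are $p,p_2,q_1$, so $z$ now has degree one and $(z,q_1)$ is forced. What remains is the path $q_2q_3q_4q_5$, because the edge $q_5q_1$ is gone. This path has the unique perfect matching $\{q_2q_3,q_4q_5\}$, forced from the leaf $q_2$.

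All $N-8$ vertices are covered, so the forced edges give the unique perfect matching. The main obstacle is bookkeeping: at each forcing step I must confirm that the vertex really has no other surviving neighbour. This is delicate where chord endpoints, the $p_i=7+2t+2i$ and $p$ coincide with path vertices, and where the extra edges $p_iq_{i-1}$, $pp_1$, $pz$, $zp_2$ attach. I would handle it by listing the full neighbourhood in $L_N$ of each even vertex $2k\ge 8$ and checking that every neighbour other than $2k+1$ has a smaller label or has already been removed.
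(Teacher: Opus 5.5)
Your argument is correct, but it reaches the conclusion by a different mechanism than the paper. The paper first writes down $M_N=\{(2k,2k+1):4\le k\le t+10\}\cup\{zq_1,q_2q_3,q_4q_5\}$ and then invokes Lemma~\ref{lem:alternating-cycle}. On a hypothetical $M_N$-alternating cycle it takes the matching edge $(2k,2k+1)$ with $k$ minimal. The non-matching cycle edge at $2k$ goes to some $w$ with $8\le w<2k$, which contradicts minimality. The remaining case $V(C)\subseteq\{z,q_1,\ldots,q_5\}$ is impossible because the only cycle there has length five.

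You instead use the elementary identity $\Phi(G)=\Phi(G-u-v)$ for a pendant edge $uv$. You peel $L_N$ in the order $8,9,\ldots,21+2t$, then $z,q_1$, then $q_2,\ldots,q_5$. Both proofs rest on the same structural fact: in $L_N$, every neighbour of an even vertex $2k\in\{8,\ldots,20+2t\}$ other than $2k+1$ has a smaller label that is at least $8$. The paper uses this fact as a descent along a cycle, and you use it as an elimination order.

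Your route has two advantages: it is self-contained, since it does not need Lemma~\ref{lem:alternating-cycle}, and it proves existence and uniqueness in one pass. The paper's route is slightly shorter. It handles the $q$-part by a parity remark instead of further forcing, and it never has to track the current graph step by step.

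Two small points to tighten when you write it up.
\begin{enumerate}
\item The only check needed for the extra edges $pp_1,pz,zp_2,p_iq_{i-1}$ is that $p=21+2t$ and $p_i=7+2t+2i$ are all odd. Hence none of these edges meets an even label. The neighbourhoods of the odd vertices play no role in the forcing, so no ``interference'' check is needed there.
\item Your claim that $9+2j$ was matched ``in the first group'' holds only for $j\le 6$. For $j\ge 7$ we have $9+2j=23+2(j-7)$, so this vertex was matched at step $j-7$ of your own induction. That step still precedes step $j$, so the argument goes through.
\end{enumerate}
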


	\begin{proof}
		Let $N=28+2t$, where $t$ is a nonnegative integer.
		By construction,
		$M_N=\{(2k,2k+1):4\le k\le t+10\}
		\cup\{zq_1,q_2q_3,q_4q_5\}$
		is a perfect matching of $L_N$.
		Suppose that $L_N$ contains an $M_N$-alternating cycle $C$.
		If $C$ contains a vertex of $S_t-U$, then it contains
		a matching edge $(2k,2k+1)$ with $4\le k\le t+10$.
		Let $k$ be the smallest such integer.
		Let $(2k,w)$ be the edge of $C$ incident with $2k$
		that does not belong to $M_N$.
		By construction, $8\le w<2k$.
		Thus $w\in\{2i,2i+1\}$ for some integer $i$
		with $4\le i<k$.
		Since $C$ is $M_N$-alternating, $(2i,2i+1)\in E(C)$.
		This contradicts the minimality of $k$.
		Hence $V(C)\subseteq\{z,q_1,q_2,q_3,q_4,q_5\}$.
		These vertices induce the cycle $q_1q_2q_3q_4q_5q_1$
		together with the edge $zq_1$.
		The only cycle in this subgraph has length five,
		but an $M_N$-alternating cycle has even length,
		a contradiction.
		Thus, $L_N$ does not contain any $M_N$-alternating cycle.
		By Lemma~\ref{lem:alternating-cycle}, $M_N$ is
		the unique perfect matching of $L_N$.
	\end{proof}

\FloatBarrier
\subsection{The construction of \texorpdfstring{$B$ and $G_N$}{B and G(N)}}
\label{sec:full-construction}

\paragraph{The construction of \texorpdfstring{$B$}{B}.}
Let $H$ be the bipartite graph with partite sets
$X=\{a_{2i},b_{2i+1}:0\le i\le7\}$ and
$Y=\{a_{2i+1},b_{2i}:0\le i\le7\}$ and edge set
$E(H)=\{a_i a_{i+1},b_i b_{i+1},a_i b_i:0\le i\le15\}$,
where $a_{16}=a_0$ and $b_{16}=b_0$.
Figure~\ref{fig:H-D-B}(a) shows a plane drawing of $H$.
For each $0\le i\le7$, let $D_i$ be a dodecahedral graph
and let $z_i\in V(D_i)$, as shown in
Figure~\ref{fig:H-D-B}(b).
Assume that $H,D_0,\ldots,D_7$ are pairwise vertex-disjoint.
Let $B$ be the graph obtained by splicing $H$ at $a_{2i+1}$
with $D_i$ at $z_i$ for each $i$, choosing the bijections
to respect the plane drawings.

Clearly, $B$ is a simple cubic planar graph of order $176$
and girth $5$; see Figure~\ref{fig:H-D-B}(c).
Since $H$ and each $D_i$ are 3-connected,
Lemma~\ref{lem:splice} implies that $B$ is 3-connected
and hence bridgeless.
Let $c=\Phi(B)$, which is a positive integer
by Lemma~\ref{lem:petersen}.

\begin{figure}[!htbp]
\centering
\includegraphics[height=.72\textheight,width=\textwidth,keepaspectratio,
trim=0 65 0 0,clip]{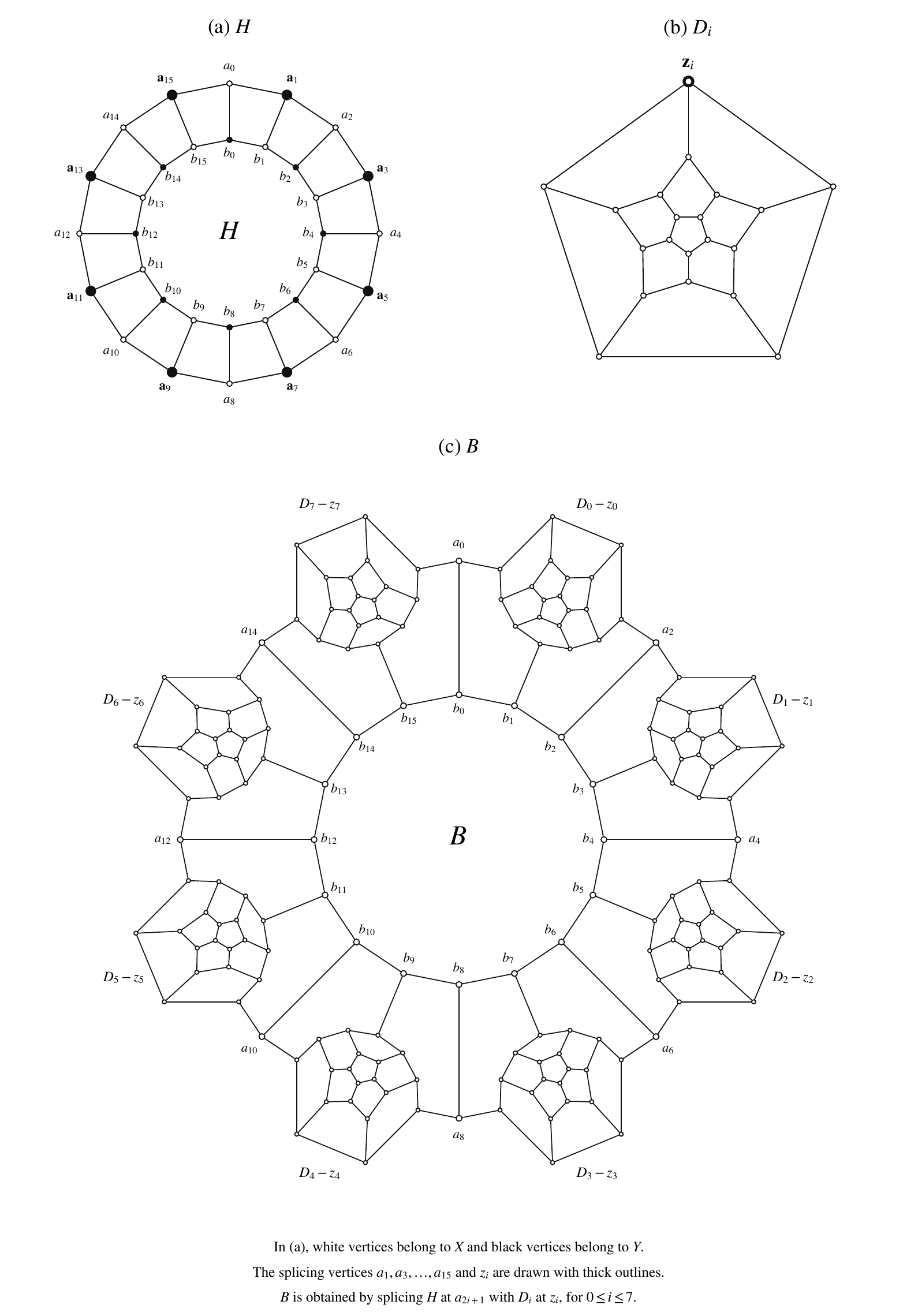}
\caption{The graphs $H$, $D_i$ and $B$. In (a), white and black
vertices indicate the partite sets $X$ and $Y$, respectively.
The splicing vertices $a_1,a_3,\ldots,a_{15}$ in (a) and $z_i$
in (b) are drawn with thick outlines.}
\label{fig:H-D-B}
\end{figure}

\Needspace{8\baselineskip}
\paragraph{The construction of \texorpdfstring{$G_N$}{G(N)}.}
Let $N\ge28$ be even, and assume that $B$ and $L_N$
are vertex-disjoint. Put
$F_N=\{(a_0,20),\allowbreak(a_2,18),\allowbreak
(a_4,16),\allowbreak(a_6,14),\allowbreak(a_8,12),\allowbreak
(a_{10},10),\allowbreak(a_{12},8),\allowbreak(a_{14},8)\}$.
Let $G_N$ be the graph with vertex set
$V(G_N)=V(B)\cup V(L_N)$ and edge set
$E(G_N)=E(B)\cup E(L_N)\cup F_N$.
Then $|V(G_N)|=N+168$.
By construction, $G_N$ is planar and has girth $5$.
Figure~\ref{fig:complete} shows a plane drawing of $G_{28}$.
\begin{figure}[!htbp]
\centering
\includegraphics[height=.68\textheight,width=\textwidth,keepaspectratio]{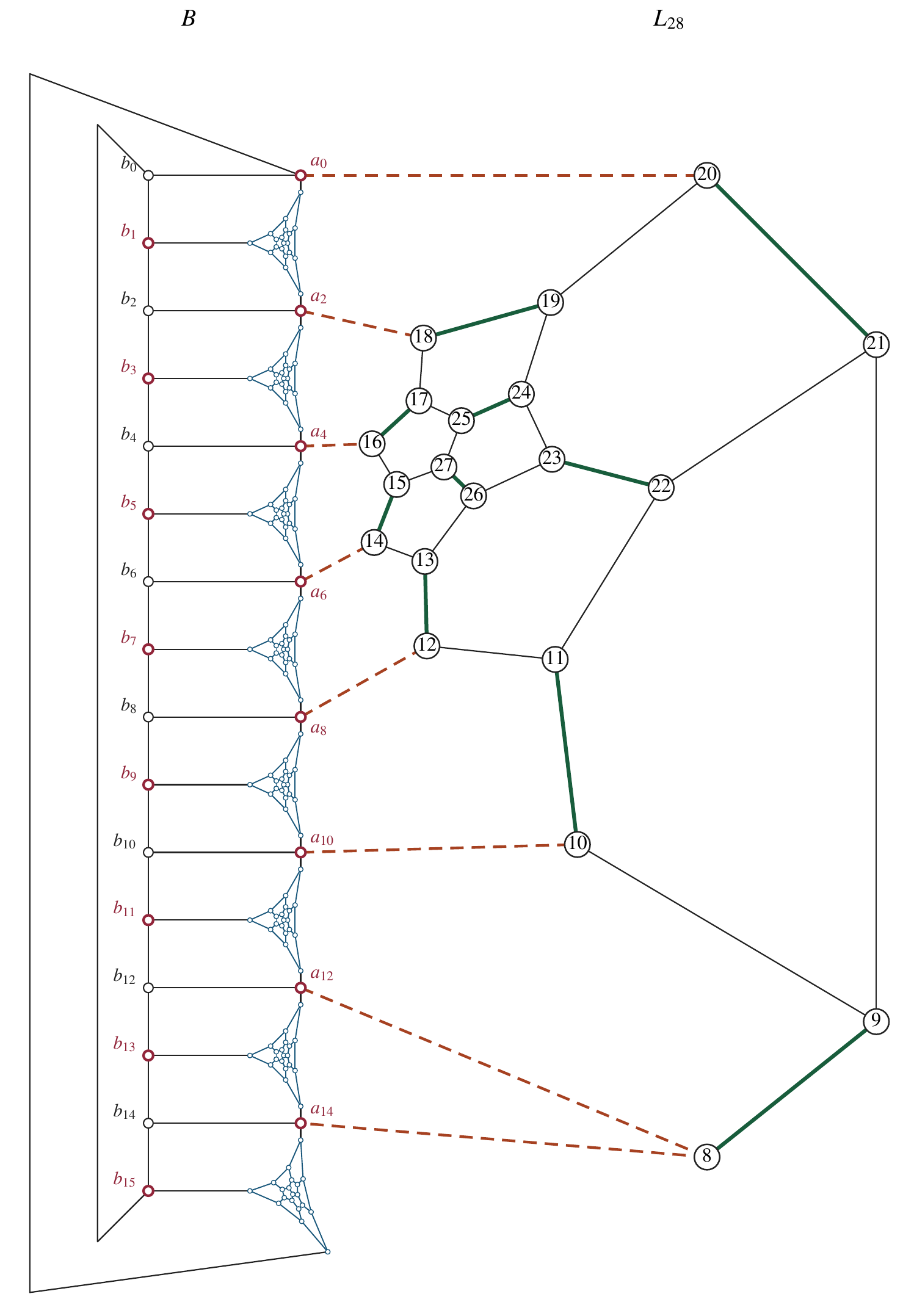}
\caption{The graph $G_{28}$, of order $196$. The fixed graph $B$
is on the left, and $L_{28}$ is on the right. All vertices and
edges are shown. Red circles with thick outlines indicate the
sixteen vertices of $X$; the eight blue subgraphs are the graphs
$D_i-z_i$. Dashed orange edges form $F_{28}$, and thick green edges form the unique
perfect matching of $L_{28}$. The two bends on each of the
leftmost edges are not vertices.}
\label{fig:complete}
\end{figure}

\FloatBarrier
\Needspace{10\baselineskip}
\subsection{\texorpdfstring{Proof of Theorem~\ref{thm:main}}{Proof of Theorem 1}}

\begin{proof}[Proof of Theorem~\ref{thm:main}]
	Let $N\ge28$ be even, and let $G_N$ be the graph constructed
	in Section~\ref{sec:full-construction}.
	Recall that $c=\Phi(B)$ is a positive integer.
	By construction, $G_N$ is a simple planar graph of order
	$N+168$ and girth $5$.
	
	\Needspace{5\baselineskip}
	\begin{Cla}\label{claim:connectivity}
		The graph $G_N$ is 3-connected.
	\end{Cla}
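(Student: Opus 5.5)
The plan is to show that $G_N$ is essentially a splice of two 3-connected graphs along an 8-edge cut, and then rerun the argument from the proof of Lemma~\ref{lem:splice}.

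\textbf{Setup.} Put $L=L_N$ and $G=G_N$. Let $S\subseteq V(G)$ with $|S|\le 2$; we show that $G-S$ is connected.

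\textbf{Step 1: the side $L$.} Since $Q_N$ is 3-connected, the contraction $G/V(B)$ should correspond to $Q_N/U$.
\begin{itemize}
\item $U$ induces a connected subgraph of $Q_N$: the edges of $A$ form the 5-cycle $0\,1\,2\,3\,4$ with the path $2\,5\,6\,7$ attached.
\item The edges of $Q_N$ leaving $U$ are $(0,10),(1,8),(3,14),(4,12),(5,16),(6,18),(7,8),(7,20)$. Their endpoints in $L$ are $10,8,14,12,16,18,8,20$, which is exactly the multiset of $L$-endpoints of $F_N$.
\item Hence $G/V(B)$ is obtained from $Q_N/U$ up to parallel edges, and contracting a connected set keeps 3-connectivity.
\item Consequently, if $S\subseteq V(L)$, then $B$ stays connected in $G-S$, and a disconnection of $G-S$ would give a disconnection of $(G/V(B))-S$. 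Since $G/V(B)\cong Q_N/U$ is 3-connected, this is impossible.
\end{itemize}

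\textbf{Step 2: the side $B$.} Now suppose $S\subseteq V(B)$. We need $B-S$ to have every component attached to $L$, so that $L$, being connected, glues everything together.
\begin{itemize}
\item The vertices of $B$ incident with $F_N$ are $a_0,a_2,\dots,a_{14}$.
\item The graph obtained from $B$ by identifying these eight vertices into one vertex should be 3-connected. I would argue this through $H$: each $D_i-z_i$ is attached to $H-a_{2i+1}$ by three disjoint edges, and $H$ is the prism-like ladder $C_{16}\square K_2$. Identifying the even $a_i$ keeps this 3-connected.
\item A simpler route: $B$ itself is 3-connected, so $B-S$ is connected for $|S|\le2$, and at least one of the eight attachment vertices $a_{2i}$ survives. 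Then $G-S$ is connected whenever $L$ is, and $L$ is untouched in this case.
\item So this case only needs $B$ 3-connected, which was already shown, plus $|F_N$-attachments$|\ge3$.
\end{itemize}

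\textbf{Step 3: the mixed case.} Here $S=\{a,b\}$ with $a\in V(B)$ and $b\in V(L)$.
\begin{itemize}
\item $B-a$ is connected, since $B$ is 3-connected.
\item $L-b$ should be connected. This holds because $Q_N-U-b$ is connected, which in turn follows from the 3-connectivity of $Q_N/U$ after deleting the contracted vertex and $b$.
\item It remains to find an edge of $F_N$ avoiding both $a$ and $b$. The edges of $F_N$ have distinct $B$-ends, and their $L$-ends are distinct except that vertex $8$ is used twice.
\item Deleting $a$ kills at most one edge of $F_N$, and deleting $b$ kills at most two. This leaves at least $8-3=5$ edges, so $G-S$ is connected.
\end{itemize}

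\textbf{Main obstacle.} The delicate point is Step 1. We must verify that $Q_N/U$ (equivalently $G/V(B)$) is 3-connected, that $G/V(B)$ matches $Q_N/U$, and that $L$ and $L-b$ are connected.

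Contracting a connected vertex set of a 3-connected graph need not preserve 3-connectivity in general. Here, though, $Q_N/U$ minus the contracted vertex is just $L$, and $L$ minus one more vertex is $Q_N-U-b$. So the needed fact reduces to this: $Q_N-(U\cup\{b\})$ and $Q_N-(U\cup\{b,b'\})$ have every component adjacent to $U$.

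I would check this directly from the explicit structure of $L$, using the Hamiltonian-like path $8,9,\dots,21+2t$ together with the edges to $z,q_1,\dots,q_5$. I would also use the fact that the $U$-neighbours $8,10,12,14,16,18,20$ are spread along this path, so deleting two vertices leaves every segment of the path touching some $U$-neighbour.
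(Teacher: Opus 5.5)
Your Step~3 rests on a false claim. $L_N$ is not 2-connected, so $Q_N/U\cong G_N/V(B)$ is not 3-connected. In $Q_N$ the vertex $8$ has neighbours $1,7,9$, and $1,7\in U$. So $8$ is a leaf of $L_N$, $L_N-9$ has $\{8\}$ as a component, and $8$ has degree $2$ in $Q_N/U$.

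Take $S=\{a,9\}$ with $a\in V(B)$. Your argument finds an edge of $F_N$ avoiding $a$ and $9$ and then relies on $L_N-9$ being connected, which it is not. Vertex $8$ stays attached only because it has two $F_N$-edges, to the distinct vertices $a_{12}$ and $a_{14}$. Counting ``at least five surviving edges'' says nothing about this.

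The fallback in your last paragraph does not close the gap either. Knowing only that every component of $L_N-W$ is \emph{adjacent} to $U$ is not enough when a vertex of $B$ is also deleted. A component joined to $B$ by a single $F_N$-edge is cut off if $a$ is that edge's end in $B$.

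Separately, the principle in Step~1 that contracting a connected set preserves 3-connectivity is false, as you note yourself. Step~1 still holds for a simpler reason: for $S\subseteq V(L_N)$ we have $(Q_N/U)-S=(Q_N-S)/U$, a contraction of a connected graph.

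The missing idea is a quantitative version of adjacency, which is how the paper argues. Put $W=S\cap V(L_N)$ and $Z=S\cap V(B)$.
\begin{itemize}
\item Every component $C$ of $L_N-W$ has at least $3-|W|$ neighbours in $U$. Otherwise $W$ together with the $U$-neighbours of $C$ has at most two vertices, and deleting it from $Q_N$ separates $C$ from the rest of $U$ (note $|U|=8$). This contradicts the 3-connectivity of $Q_N$.
\item The $Q_N$-edges from $C$ to $U$ correspond to distinct edges of $F_N$, by the multiset correspondence you already checked.
\item Edges of $F_N$ have distinct ends in $B$. Hence $C$ has at least $3-|W|>|Z|$ neighbours in $B$, so it is joined to the connected graph $B-Z$.
\end{itemize}
For $W=\{9\}$ and $C=\{8\}$, this is exactly the two neighbours $1,7$ in $U$. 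The argument treats all three of your cases uniformly and never needs $L_N$ or $L_N-b$ to be connected.
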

%
%

\begin{proof}
	Let $S\subseteq V(G_N)$ with $|S|\le2$, and put
	$W=S\cap V(L_N)$ and $Z=S\cap V(B)$.
	Since $B$ is 3-connected, $B-Z$ is connected.
	Let $C$ be a component of $L_N-W$.
	Since $L_N=Q_N-U$, every neighbor of $C$ outside $C$
	in $Q_N$ belongs to $U\cup W$.
	Suppose that $C$ has at most $2-|W|$ neighbors in $U$.
	Deleting $W$ and the neighbors of $C$ in $U$ removes
	at most two vertices from $Q_N$ and separates $C$
	from a vertex of $U$, since $|U|=8$.
	This contradicts the 3-connectivity of $Q_N$.
	Thus $C$ has at least $3-|W|$ neighbors in $U$,
	so at least $3-|W|$ edges of $Q_N$ join $C$ to $U$.
	By construction, every edge of $Q_N$ joining $C$ to $U$
	is replaced by a distinct edge of $F_N$ joining $C$ to $B$.
	The edges of $F_N$ have distinct incident vertices in $B$.
	Hence $C$ has at least $3-|W|$ neighbors in $B$.
	Since $|Z|\le2-|W|<3-|W|$, at least one neighbor
	of $C$ in $B$ does not belong to $Z$.
	Therefore, $C$ is joined to $B-Z$ in $G_N-S$.
	Every component of $L_N-W$ is therefore joined
	to the connected graph $B-Z$ in $G_N-S$.
	Hence $G_N-S$ is connected, and $G_N$ is 3-connected.
\end{proof}

\begin{Cla}\label{claim:matching}
	Every edge of $F_N$ is inadmissible.
\end{Cla}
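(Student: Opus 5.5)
The plan is to apply Lemma~\ref{lem:barrier-even} with a barrier $S$ such that $L_N$ is an even component of $G_N-S$ and every edge of $F_N$ joins $S$ to $L_N$. The endpoints of $F_N$ in $B$ are $a_0,a_2,\ldots,a_{14}$, which all lie in $X$. So the natural choice is $S=X$, or more precisely its image in $B$. Here $a_{2i}\in X$ survive the splicing, since only the $a_{2i+1}\in Y$ are removed, and $b_{2i+1}\in X$ also survive. Thus $X\subseteq V(B)$ and $|X|=16$.

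First I will describe the components of $G_N-X$. In $H$, every vertex of $Y$ has all its neighbours in $X$, since $H$ is bipartite. So the components of $H-X$ are the eight isolated vertices $b_{2i}$ and the eight vertices $a_{2i+1}$. After splicing, the vertex $a_{2i+1}$ is replaced by $D_i-z_i$, which is connected and has $19$ vertices. Its only edges leaving it go to the three former neighbours of $a_{2i+1}$, and these lie in $X$. Each $b_{2i}$ has all its neighbours in $X\cup\{\}$ of $B$. In $G_N$ no edge of $F_N$ touches $b_{2i}$ or $D_i-z_i$, so these remain isolated components. This gives sixteen odd components of $G_N-X$ inside $B$: the eight singletons $b_{2i}$ and the eight copies of $D_i-z_i$ of order $19$.

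The vertices of $L_N$ are joined to $B$ only through $F_N$, and all those edges land in $X$. Hence $L_N$ is a union of components of $G_N-X$. It has $N-8$ vertices, which is even. I also expect $L_N$ to be connected: it is $Q_N-U$ with $Q_N$ a fullerene, and this can be checked from the explicit edge list. The path $8,9,\ldots,21+2t$ together with $p,z,q_1,\ldots,q_5$ visibly connects everything. Even without connectedness, every component of $L_N$ has a perfect matching by Lemma~\ref{lem:unique-LN}, so each is even. Then $o(G_N-X)=16=|X|$.

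Next I need $G_N$ to be matchable, which is required in the definition of a barrier. A perfect matching of $B$ exists by Lemma~\ref{lem:petersen}, and $L_N$ has $M_N$ by Lemma~\ref{lem:unique-LN}. Their union is a perfect matching of $G_N$. So $X$ is a barrier of $G_N$, and Lemma~\ref{lem:barrier-even} applied to each even component contained in $L_N$ shows that every edge of $F_N$ is inadmissible.

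The only delicate step is checking that the components inside $B$ are exactly as described. Specifically, after splicing, each $D_i-z_i$ attaches only to vertices of $X$, namely $a_{2i}$, $a_{2i+2}$ and $b_{2i+1}$. This follows because the splice edges replace the edges at $a_{2i+1}$.
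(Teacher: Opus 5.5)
Your proposal is correct and matches the paper's proof: $G_N$ is matchable via $M_B\cup M_N$, the components of $G_N-X$ are the eight singletons $b_{2i}$, the eight $19$-vertex graphs $D_i-z_i$, and the even part $L_N$, so $X$ is a barrier and Lemma~\ref{lem:barrier-even} makes every edge of $F_N$ inadmissible. Your fallback that each component of $L_N$ is even because $L_N$ has a perfect matching removes any need for connectedness of $L_N$, although $L_N$ is in fact connected. Also replace the stray ``$X\cup\{\}$'' with $X$, and call $D_i-z_i$ components rather than ``isolated'' components.
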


\begin{proof}
	Let $M_B$ be a perfect matching of $B$, and let $M_N$
	be the unique perfect matching of $L_N$.
	Since $V(G_N)=V(B)\cup V(L_N)$ and the two vertex sets
	are disjoint, $M_B\cup M_N$ is a perfect matching of $G_N$.
	Thus $G_N$ is matchable.
	By construction, the components of $G_N-X$ are the eight
	graphs $D_i-z_i$, the eight isolated vertices $b_{2i}$,
	where $0\le i\le7$, and the graph $L_N$.
	Each $D_i-z_i$ has $19$ vertices, while $L_N$
	has even order $N-8$.
	Hence $o(G_N-X)=16=|X|$, so $X$ is a barrier of $G_N$.
	Every edge of $F_N$ joins a vertex of $X$ to a vertex
	of the even component $L_N$ of $G_N-X$.
	Therefore, every edge of $F_N$ is inadmissible
	by Lemma~\ref{lem:barrier-even}.
\end{proof}

	By Claim~\ref{claim:matching}, $F(G_N)=F(B)\cup F(L_N)$.
	Lemma~\ref{lem:matching-product}, applied to $G_N$, $B$
	and $L_N$, gives $\Phi(G_N)=\Phi(B)\Phi(L_N)$.
	By Lemma~\ref{lem:unique-LN}, $\Phi(L_N)=1$.
	Therefore, $\Phi(G_N)=\Phi(B)=c$.
	Together with Claim~\ref{claim:connectivity},
	this proves the theorem.
\end{proof}

\FloatBarrier
\section{Planar 3-connected graphs of minimum degree five}\label{sec:degree-five}

\subsection{Reduction to a unique perfect matching}
\label{sec:unique-deletion}

We first reduce the minimum-degree-five problem to graphs with a
unique perfect matching.
 We call the graph with no vertices the
\emph{empty graph}. Its unique perfect matching is the empty set of edges.

\Needspace{7\baselineskip}
\begin{Lem}\label{lem:halving}
	Let $G$ be a graph with exactly $p$ perfect matchings,
	where $p\ge1$.
	Then there exists a set $W\subseteq V(G)$ such that
	$|W|\le2\lfloor\log_2p\rfloor$ and $G-W$ has exactly
	one perfect matching.
\end{Lem}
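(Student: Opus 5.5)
We argue by strong induction on $p$. If $p=1$, take $W=\varnothing$; then $|W|=0=2\lfloor\log_2 1\rfloor$. Now suppose $p\ge2$, and let $M_1\ne M_2$ be two perfect matchings of $G$. Their symmetric difference contains an $M_1$-alternating cycle, so there is an edge $e=uv\in M_1\setminus M_2$. Partition the perfect matchings of $G$ into two classes: those containing $e$ and those not containing $e$. Both classes are nonempty, since $M_1$ lies in the first and $M_2$ in the second. Hence one class has size $p'$ with $1\le p'\le\lfloor p/2\rfloor$.

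\emph{Case 1: the first class is the small one.} The perfect matchings of $G$ containing $e$ correspond bijectively, via $M\mapsto M\setminus\{e\}$, to the perfect matchings of $G-\{u,v\}$. So $G'=G-\{u,v\}$ has exactly $p'$ perfect matchings.

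\emph{Case 2: the second class is the small one.} The perfect matchings avoiding $e$ are exactly those of $G-e$, which is a spanning subgraph. Deleting an edge is not deleting vertices, so this case cannot be handled directly. Instead, I would choose the edge more carefully so that Case 1 always applies. Let $v$ be a vertex covered by different edges in two distinct perfect matchings. Partition the perfect matchings according to the edge at $v$. At least two classes are nonempty, so some class, say the one using $e=vw$, has size at most $p/2$. Removing $\{v,w\}$ gives $G'=G-\{v,w\}$, whose perfect matchings are in bijection with that class. Thus $1\le\Phi(G')\le\lfloor p/2\rfloor$, and Case 2 never arises.

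Apply induction to $G'$. This gives $W'\subseteq V(G')$ with $|W'|\le 2\lfloor\log_2\Phi(G')\rfloor$ such that $G'-W'$ has a unique perfect matching. Set $W=W'\cup\{v,w\}$. Then $G-W=G'-W'$, and
\[
|W|\le 2+2\lfloor\log_2\lfloor p/2\rfloor\rfloor=2\lfloor\log_2 p\rfloor,
\]
using $\lfloor\log_2\lfloor p/2\rfloor\rfloor=\lfloor\log_2 p\rfloor-1$ for $p\ge2$. The empty graph counts as having a unique perfect matching, which covers the case where the deletions exhaust $V(G)$.

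The only delicate point is choosing the split so that the smaller class always corresponds to a vertex deletion. Splitting by the matching edge at a fixed vertex achieves this, because every class is then of the form ``contains $vw$'', and such a class is counted by $\Phi(G-\{v,w\})$.
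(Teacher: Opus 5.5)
Your proof is correct and is essentially the paper's argument: take a vertex $v$ covered by different edges in two perfect matchings, pick the incident edge $vw$ lying in the fewest perfect matchings so that $1\le\Phi(G-v-w)\le p/2$, and induct using $\lfloor\log_2 q\rfloor+1\le\lfloor\log_2 p\rfloor$. The one point to state explicitly is that you choose the smallest \emph{nonempty} class, since that is what gives $\Phi(G')\ge1$ (the paper does this by minimizing $\Phi(G-u-v)$ over edges at $v$ that lie in some perfect matching), and the Case~1/Case~2 detour can simply be deleted.
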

\begin{proof}
We use induction on $p=\Phi(G)$. If $p=1$, then $G$ has a
unique perfect matching. Taking $W=\varnothing$ gives
$G-W=G$ and $|W|=0=2\lfloor\log_2p\rfloor$.
Suppose that $p>1$. Then $G$ has at least two distinct
perfect matchings.
Hence some vertex $v$ is incident with at least two distinct edges,
each of which belongs to a perfect matching of $G$.
Among the edges incident with $v$ that belong to a perfect
matching, choose $uv$ so that $\Phi(G-u-v)$ is as small as possible.
Put $G'=G-u-v$ and $q=\Phi(G')$.
Since $uv$ belongs to a perfect matching of $G$, the graph $G'$
has a perfect matching, so $q\ge1$.

For each edge $vw$ incident with $v$, the perfect matchings of $G$
containing $vw$ are precisely the sets $\{vw\}\cup M$, where
$M$ is a perfect matching of $G-v-w$.
Thus $uv$ belongs to exactly $q$ perfect matchings of $G$.
By the choice of $uv$, another edge incident with $v$ belongs
to at least $q$ perfect matchings of $G$.
No perfect matching contains both edges, since they are incident
with the same vertex $v$. Therefore $p\ge2q$, and hence
$1\le\Phi(G')=q\le p/2<p$.
By the induction hypothesis applied to $G'$, there exists
$W'\subseteq V(G')$ with $|W'|\le2\lfloor\log_2q\rfloor$
such that $G'-W'$ has a unique perfect matching.
Let $W=W'\cup\{u,v\}$. Since $G-W=G'-W'$, the graph $G-W$
has a unique perfect matching.
Also, $u,v\notin W'$, so $|W|=|W'|+2$.
Since $q\le p/2$, we have
$\lfloor\log_2q\rfloor+1\le\lfloor\log_2p\rfloor$.
Therefore $|W|\le2\lfloor\log_2q\rfloor+2
\le2\lfloor\log_2p\rfloor$.
\end{proof}

\subsection{An order bound for plane graphs}
\label{sec:plane-order-bound}

We use the following classical theorem in the next argument.

\begin{Lem}[Kotzig; see~\cite{Nenashev2014}]\label{lem:kotzig}
If a connected graph has a unique perfect matching, then some
edge of that matching is a bridge.
\end{Lem}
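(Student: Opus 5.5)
The statement is Kotzig's lemma: a connected graph with a unique perfect matching has a bridge lying in that matching. I would argue by contradiction. Let $G$ be connected with unique perfect matching $M$, and suppose no edge of $M$ is a bridge. By Lemma~\ref{lem:alternating-cycle} it suffices to construct an $M$-alternating cycle, which will contradict uniqueness.

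The construction is a maximal alternating path argument. Among all $M$-alternating paths in $G$ that begin and end with edges of $M$, choose a longest one, $P=x_0x_1\cdots x_{2k+1}$, so that $x_{2i}x_{2i+1}\in M$. Such paths exist, since any single edge of $M$ is one. Consider the endpoint $x_{2k+1}$.

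If $x_{2k+1}$ has a neighbour $y\notin V(P)$ via a non-$M$ edge, then $y$ is matched by $M$ to some vertex $y'$. This $y'$ is also off $P$, because every vertex of $P$ is matched within $P$. Then $Py y'$ is a longer path of the same type, contradicting maximality. So every non-$M$ neighbour of $x_{2k+1}$ lies on $P$.

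If $x_{2k+1}$ is adjacent by a non-$M$ edge to some $x_{2i}$, then $x_{2i}x_{2i+1}\cdots x_{2k+1}x_{2i}$ is an $M$-alternating cycle, and we are done. The remaining case is that all non-$M$ neighbours of $x_{2k+1}$ are vertices $x_{2i+1}$ with odd indices. Here I would use the hypothesis that no $M$-edge is a bridge.

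This last case is the main obstacle. My first instinct is to remove the edge $x_{2k}x_{2k+1}$ and exploit that it is not a bridge, but with odd-index neighbours this alone does not give the cycle directly. I would therefore replace the longest-path choice by a cleaner structural argument, by induction on $|V(G)|$.

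Pick $e=uv\in M$. Since $e$ is not a bridge, $G-e$ is connected and contains a $u$–$v$ path $R$. Among all $M$-edges and all such paths, I would choose the pair with $R$ shortest, and examine how $R$ meets $M$.

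The cleanest route I would actually commit to is the standard one. Form $G'$ by deleting the vertices $u,v$. The graph $G'$ still has the unique perfect matching $M\setminus\{e\}$. Apply induction to each component $K$ of $G'$: each $K$ has an $M$-edge $f$ that is a bridge of $K$. Since $f$ is not a bridge of $G$, some path through $u$ or $v$ reconnects the two sides of $f$. One then traces an alternating route from $u$ through one side, across $f$, and back through $v$ to build an alternating cycle through $e$ and $f$.

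The delicate bookkeeping is ensuring that the route alternates correctly at $u$ and $v$. I would handle this with a parity argument on the two sides of $f$: each side has odd order in $K-f$, and a side containing no neighbours of $u$ or $v$ would make $f$ a bridge of $G$.
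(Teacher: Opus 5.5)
Your proposal does not yet prove the lemma. The paper gives no proof of it either; it simply cites Kotzig via Nenashev, so your sketch has to stand on its own. Your first two routes are abandoned, and the induction you commit to breaks at its only nontrivial step.

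\textbf{The gap in the committed route.} From ``$f=ab$ is a bridge of $K$ but not of $G$'' you correctly get that each side $K_a,K_b$ of $K-f$ has odd order and sends an edge to $\{u,v\}$. You get nothing more. To close an alternating cycle through $e$ and $f$ you would also need two things:
\begin{enumerate}
\item one side attaches to $u$ and the other to $v$;
\item inside $K_a$ there is an $M$-alternating path from the attachment vertex to $a$ that starts with an $M$-edge and ends with a non-$M$ edge, and similarly inside $K_b$.
\end{enumerate}
Neither follows, and your parity remark supplies neither.

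Here is a concrete obstruction. Take $M=\{uv,ab,cd\}$ with non-$M$ edges $ua,ub,uc,vc$.
\begin{itemize}
\item The graph is connected and $M$ is its unique perfect matching.
\item $uv$ lies on the triangle $uvc$, so it is not a bridge.
\item $ab$ is a bridge of the component $\{a,b\}$ of $G-u-v$, but it lies on the triangle $uab$ in $G$.
\item Both sides of $ab$ attach only to $u$, so no cycle at all contains both $uv$ and $ab$.
\end{itemize}
Your step uses only that $e$ and $f$ are non-bridges of $G$, so it cannot be valid. The contradiction must come from a global use of the hypothesis; in this example the bridge is $cd$. Equivalently, the inductive conclusion ``each component of $G-u-v$ has an $M$-bridge'' is too weak to lift back to $G$.

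\textbf{What is missing.} The obstacle is the same one you hit in the longest-path attempt. The case where every non-$M$ neighbour of $x_{2k+1}$ has odd index is the odd-cycle (blossom) situation, which is why that argument only works for bipartite graphs. For example, take triangles $abc$ and $def$ with $M=\{ab,cd,ef\}$. A longest alternating path is $abcdef$, and the bridge $cd$ sits in the middle, not at an end.

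You need a stronger statement that survives vertex deletion and makes no matching assumption. The standard one is the theorem of Grossman and H\"aggkvist, extended by Yeo: if the edges of a graph are coloured with two colours and there is no alternating cycle, then some vertex $z$ has every component of $G-z$ joined to $z$ by edges of a single colour. Colour the edges of $M$ and of $E(G)\setminus M$ differently. By Lemma~\ref{lem:alternating-cycle} there is no alternating cycle. Let $z'$ be the $M$-partner of $z$. The component of $G-z$ containing $z'$ is joined to $z$ only by $M$-edges, hence only by $zz'$, so $zz'$ is a bridge. The real work lies in proving that theorem, which needs its own induction.
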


We next prove an order bound for plane graphs.

Let $H$ be a connected simple plane graph, and let
$f_1,\ldots,f_h$ be distinct faces of $H$, where $h=0$ is allowed.
For each $1\le i\le h$, let $C_i$ denote the boundary of $f_i$,
and assume that $C_1,\ldots,C_h$ are pairwise vertex-disjoint cycles.
Put $\partial V(H)=\bigcup_{i=1}^{h}V(C_i)$ and
$b=|\partial V(H)|$.
Let $\widehat H$ be the plane graph obtained from $H$
by adding one new vertex in each face $f_i$ and joining
it to every vertex of $C_i$, with all these new edges
drawn inside $f_i$.
If $h=0$, then $\partial V(H)=\varnothing$ and
$\widehat H=H$.
Let $Z\subseteq V(H)\setminus\partial V(H)$, and put
$I=H-(\partial V(H)\cup Z)$, $r=|Z|$, and $q=|V(I)|$.
Thus $|V(H)|=b+r+q$.

Figure~\ref{fig:boundary-graphs}(a)--(c) illustrates
the above definitions for one particular choice of $H$.
In this example, $H$ consists of the cycle $C=xabycdx$,
two additional vertices $u,v$, and the edges
$ux,ua,ub,uy,vx,vy,vc,vd,uv$.
In the plane drawing shown in Figure~\ref{fig:boundary-graphs}(a),
let $f_1$ be the face bounded by $C$, and take
$h=1$, $C_1=C$, and $Z=\varnothing$.
Then $\partial V(H)=V(C)=\{x,a,b,y,c,d\}$,
$b=6$, and $r=0$.
By the definition of $\widehat H$, we add a new vertex $w$
in $f_1$ and join it to each of the six
vertices of $C$, retaining all vertices and edges of $H$.
This gives the graph $\widehat H$ shown in
Figure~\ref{fig:boundary-graphs}(b), where the six added
edges are dashed.
Since $Z=\varnothing$, we have $I=H-V(C)$.
Deleting the six vertices of $C$ from $H$ leaves only
the vertices $u,v$ and the edge $uv$; see
Figure~\ref{fig:boundary-graphs}(c).
Thus $q=2$, and $\{uv\}$ is the unique perfect matching
of $I$. Moreover, both $u$ and $v$ have degree five in $H$.

\begin{figure}[!htbp]
\centering
\input{figures/lemma11_selected_faces_GH.tex}
\caption{A configuration with $h=1$, $b=6$, $r=0$, and $q=2$.
The face $f_1$ has boundary $C=xabycdx$, whose edges are drawn in gray.
In (b), adding the vertex $w$ and the dashed edges to $H$ gives
$\widehat H$. In (c), $I=H-V(C)$ consists of the thick edge $uv$.
Both $u$ and $v$ have degree five in $H$.
This example illustrates the definitions of $H$, $\widehat H$, and $I$.}
\label{fig:boundary-graphs}
\end{figure}

We use the following edge bound: every simple planar bipartite graph
$G$ with $|V(G)|\ge3$ satisfies $|E(G)|\le2|V(G)|-4$
\cite[Corollary~1.14]{Thomas2010}.

\begin{Lem}[{\cite[Section~12.6]{Erickson2023}}]
\label{lem:facial-connectivity}
Let $G$ be a connected simple plane graph with $|V(G)|\ge4$.
Then $G$ is 3-connected if and only if the boundary of every
face is a cycle and the boundaries of any two distinct faces
either have at most one common vertex or have only a single
edge and its two incident vertices in common.
\end{Lem}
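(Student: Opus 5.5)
The plan is to prove the two directions separately, using the Jordan curve theorem and the local structure of the rotation of edges around a vertex. Throughout, $G$ is a connected simple plane graph with $|V(G)|\ge4$, and we use the standard fact that in a 2-connected plane graph every face boundary is a cycle.

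\emph{Necessity.} Assume $G$ is 3-connected. Then $G$ is 2-connected, so every face boundary is a cycle. Let $f_1\ne f_2$ be faces whose boundaries $C_1,C_2$ share at least two vertices, and suppose the common part is not exactly one edge with its two ends. Then we can choose common vertices $x,y$ that are not the two ends of a common edge of $C_1$ and $C_2$. (If the common vertices are exactly two and adjacent on one cycle but not on the other, that pair already works. If there are at least three, pick two that are nonconsecutive on $C_1$.)

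Draw a simple closed curve $\gamma$ that runs from $x$ to $y$ through the interior of $f_1$ and back from $y$ to $x$ through the interior of $f_2$. Then $\gamma$ meets $G$ only in $\{x,y\}$. Each of the two arcs of $C_1$ between $x$ and $y$ contains an interior vertex, and these vertices lie on opposite sides of $\gamma$. Hence $\{x,y\}$ separates $G$, contradicting 3-connectivity. The one delicate point is to check that the two $x$--$y$ arcs of $C_1$ really lie on opposite sides of $\gamma$. I would argue this by looking at the position of $f_1$ near $x$: the arc of $\gamma$ inside $f_1$ starts at $x$ between the two edges of $C_1$ at $x$, so these two edges lie on opposite sides of $\gamma$.

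\emph{Sufficiency.} Assume the face condition holds. A cut vertex $v$ is impossible: some face is incident with two blocks at $v$, so its boundary walk passes $v$ twice and is not a cycle. Now suppose $\{x,y\}$ is a 2-cut, and let $A$ be one component of $G-\{x,y\}$, with $B$ the union of the others. Consider the cyclic order of the edges at $x$, ignoring the edge $xy$ if it exists. This order contains edges to $A$ and edges to $B$, so there are at least two \emph{switch corners} between an $A$-edge and a $B$-edge. The face at a switch corner has a cycle as its boundary, and this cycle passes through $x$ from $A$ to $B$. Since $x,y$ separate $A$ from $B$, the cycle must also pass through $y$. Moreover, the cycle contains vertices of both $A$ and $B$, so it is not just the edge $xy$.

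\emph{Main obstacle.} The key step is to show that two distinct such faces exist, or at least two faces whose boundaries share $x,y$ and more than the edge $xy$. If there are two switch corners giving distinct faces $f,f'$, both boundaries contain $x$ and $y$. By the hypothesis they may then share only the edge $xy$, so both must be the two faces incident with $xy$. However, each of these faces also contains a vertex of $A$ and a vertex of $B$. I expect the argument to conclude in one of two ways: the two faces share a further vertex, or one of the faces has a boundary that revisits $x$. In either case the hypothesis is violated.

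There is a degenerate situation: a single face occupies both switch corners. Then its boundary visits $x$ twice, so it is not a cycle, contradicting the hypothesis.

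The case analysis around the rotation at $x$, especially when the edge $xy$ is present or when $G-\{x,y\}$ has three or more components, is the part I expect to require the most care. The condition $|V(G)|\ge4$ is used only to exclude the triangle, for which any two faces share all three vertices.
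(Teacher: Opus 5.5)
\paragraph{Gap in sufficiency.} The paper gives no proof of this lemma; it quotes it from Erickson. Your argument therefore has to stand on its own, and the sufficiency direction is not complete.

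It stops at the decisive step. After obtaining two faces $f,f'$ through $x$ and $y$, you only list two outcomes you ``expect'' and derive neither. There is also a flaw one step earlier. You delete $xy$ from the rotation at $x$ before locating switch corners, so a switch corner may have $xy$ sitting between its $A$-edge and its $B$-edge. That gap is not a corner of any face. The two faces there are the faces incident with $xy$: one meets $x$ along an $A$-edge and $xy$, the other along $xy$ and a $B$-edge, and neither need contain vertices of both $A$ and $B$. So your claim that the two faces incident with $xy$ each contain a vertex of $A$ and of $B$ is unjustified. The case you defer, ``when the edge $xy$ is present'', is exactly where the argument is unfinished.

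\paragraph{How to close it.} The missing observation is short.
\begin{enumerate}
\item Suppose a genuine corner of a face $f$ at $x$ lies between an edge to $A$ and an edge to $B$. Since the boundary of $f$ is a cycle, these are its only two edges at $x$. The rest of the cycle joins $A$ to $B$ in $G-x$, so it passes through $y$. But it does \emph{not} contain the edge $xy$.
\item The edge $xy$ fills at most one gap of your reduced rotation. Hence at least one of your (at least two) switch corners is genuine.
\item If $xy\notin E(G)$, both switch corners are genuine. Their faces are distinct, because a face bounded by a cycle has only one corner at $x$. These two faces share $x$ and $y$ but no edge $xy$, which violates the hypothesis.
\item If $xy\in E(G)$, compare the face $f$ at a genuine switch corner with a face $g$ incident with $xy$. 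Then $g\ne f$ and both boundaries contain $x,y$. By hypothesis their common part is exactly the edge $xy$, but $xy$ is not on the boundary of $f$, a contradiction.
\end{enumerate}
Your second guess, that ``one boundary revisits $x$'', is this argument in disguise, but it has to be proved rather than anticipated.

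\paragraph{Necessity.} Your Jordan-curve argument is the standard one and works, but two choices need repair.
\begin{itemize}
\item If the only common vertices are $x,y$ and $xy$ lies on $C_1$ but not on $C_2$, then one $x$--$y$ arc of $C_1$ is the single edge $xy$ and has no interior vertex. Run the argument on the cycle on which $x,y$ are nonconsecutive.
\item If $C_1$ is a triangle, you cannot choose two common vertices that are nonconsecutive on it, so switch to $C_2$. If both are the same triangle, the two faces are its two sides, so $G$ is that triangle, which $|V(G)|\ge4$ excludes.
\end{itemize}
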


\begin{Lem}[{\cite[Theorem~3.2]{Thomas2010}}]
	\label{lem:induced-facial-cycle}
Let $G$ be a simple 3-connected plane graph, and let $f$
be a face of $G$.
Then the boundary of $f$ is a cycle $C$.
If $u,v\in V(C)$ and $uv\notin E(C)$, then $uv\notin E(G)$.
\end{Lem}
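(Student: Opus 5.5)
The statement to prove is Lemma~\ref{lem:induced-facial-cycle}: in a simple 3-connected plane graph $G$, every facial boundary is a cycle $C$ with no chords in $G$. I will derive it from Lemma~\ref{lem:facial-connectivity} together with a Jordan-curve argument.

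First, the case of few vertices. If $|V(G)|\le3$, then $G$ is $K_1$, $K_2$ or $K_3$. The 3-connected case is essentially $G=K_4$ or larger, so I assume $|V(G)|\ge4$. Lemma~\ref{lem:facial-connectivity} then gives that the boundary of $f$ is a cycle $C$.

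Next, suppose for contradiction that $u,v\in V(C)$ with $uv\in E(G)\setminus E(C)$. Since $uv\notin E(C)$, the vertices $u$ and $v$ are not consecutive on $C$. So $C-\{u,v\}$ splits into two nonempty paths $P_1$ and $P_2$, the two open arcs of $C$ between $u$ and $v$. Choose $x\in V(P_1)$ and $y\in V(P_2)$.

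The edge $uv$ is not drawn inside the face $f$, so it lies in the closed region $R$ on the other side of $C$. Consider the closed curve $J$ formed by $uv$ together with a simple arc through the interior of $f$ from $u$ to $v$. This arc exists because $f$ is an open disk-like face whose boundary is the cycle $C$. The curve $J$ meets $G$ only in $u$, $v$ and the edge $uv$. Moreover, $x$ and $y$ lie on opposite sides of $J$: along $C$ the points alternate as $u,x,v,y$, and the chord of $f$ separates the two boundary arcs.

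Now take any $x$–$y$ path in $G-\{u,v\}$. It is a curve from $x$ to $y$ that avoids $u$, $v$ and the interior of $uv$. It also avoids the interior of $f$, since edges are not drawn inside faces. Hence it avoids $J$, which contradicts the Jordan curve theorem. So $\{u,v\}$ separates $x$ from $y$, contradicting 3-connectivity.

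The main obstacle is making the separation step rigorous: justifying the arc inside $f$ and the alternation of $x$ and $y$ on opposite sides of $J$. For this I will use the standard fact that a face bounded by a cycle is one of the two Jordan regions of that cycle, so the interior of $f$ is an open disk bounded by $C$. A chord drawn inside such a disk between the non-adjacent boundary points $u$ and $v$ splits it into two regions, whose boundaries contain $P_1$ and $P_2$ respectively.

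An alternative that avoids topology is to apply Lemma~\ref{lem:facial-connectivity} to the faces on either side of $uv$. Each of these faces shares with $f$ the vertices $u$ and $v$, but I expect not the edge $uv$. I keep the Jordan-curve argument as the primary route, since this alternative needs more case checking.
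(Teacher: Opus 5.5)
The paper gives no proof of this lemma; it imports it by citation, so there is no internal argument to match. Your proof is correct. It is the standard Jordan-curve argument that a chord of a facial cycle yields a $2$-vertex cut.

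The step you flag closes as follows. The $\theta$-graph $C\cup A$ splits $f$ into two regions $f_1,f_2$ with $P_1\subseteq\partial f_1$ and $P_2\subseteq\partial f_2$. A small disc about an interior point of $A$ lies in $f$, so it meets only $A$, $f_1$ and $f_2$. By the Jordan curve theorem it nevertheless meets both components of $\mathbb{R}^2\setminus J$. Hence $f_1$ and $f_2$ lie on opposite sides of $J$, and so do $x\in\overline{f_1}$ and $y\in\overline{f_2}$.

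Within this paper, though, the alternative you set aside is the natural proof and needs no case checking. Let $g$ be a face whose boundary contains $uv$. Then $g\neq f$, because $uv\notin E(C)$, and the boundary of $g$ shares the two vertices $u,v$ with $C$. By Lemma~\ref{lem:facial-connectivity}, the two boundaries then have exactly one edge and its two ends in common. Those ends must be $u$ and $v$, so by simplicity the edge is $uv$, which forces $uv\in E(C)$, a contradiction. Your remark that $g$ shares $u,v$ but not the edge $uv$ with $f$ is already this contradiction.

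What each route buys: yours uses only the ``boundary is a cycle'' half of Lemma~\ref{lem:facial-connectivity} and produces the $2$-cut directly. The cost is point-set topology: accessibility of $u,v$ from $f$, and the $\theta$-graph lemma. The combinatorial route is two lines but relies on the full facial characterization as a black box.

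A minor point: your opening case split on $|V(G)|\le3$ is unnecessary, and a graph on at most three vertices need not be complete. A $3$-connected graph has at least four vertices by definition, which is how the paper uses it.
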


\Needspace{8\baselineskip}
\begin{Lem}\label{lem:boundary-unique}
Let $\widehat H$ be a
	simple and 3-connected plane graph.
	If the graph $I=H-(\partial V(H)\cup Z)$ has at least
	two vertices and a unique perfect matching, and every
	vertex of $I$ has degree at least $5$ in $H$, then
	$q\le2b+10h+10r-20$.
\end{Lem}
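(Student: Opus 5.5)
The plan is a global edge count in the plane graph $\widehat H$, combined with a bound on the edges inside $I$ that comes from the uniqueness of its perfect matching. Let $M$ be the unique perfect matching of $I$, so $|M|=q/2$. Let $e_I=|E(I)|$, and let $e_{\mathrm{out}}$ be the number of edges of $H$ joining $V(I)$ to $\partial V(H)\cup Z$. Every vertex of $I$ has degree at least $5$ in $H$, so
\[
5q\le 2e_I+e_{\mathrm{out}} .
\]
It then suffices to bound $e_I$ and $e_{\mathrm{out}}$ separately, each in terms of $q$, $b$, $h$ and $r$.

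\emph{Edges inside $I$.} I would prove by induction on $q$, using Lemma~\ref{lem:kotzig}, that a simple planar graph on $q\ge2$ vertices with a unique perfect matching has few edges, roughly $e_I\le \tfrac32 q-1$.
\begin{itemize}
\item Pick a component of $I$ and an edge $xy\in M$ that is a bridge of it.
\item Every other edge at $x$ or $y$ goes into a component of $I-x-y$, and each such component has a unique perfect matching.
\item Lemma~\ref{lem:alternating-cycle} limits how $x$ and $y$ attach to a single matching edge $x'y'$ of those components. For example, $xx'$ and $yy'$ cannot both be present, since together with $xy$ and $x'y'$ they form an $M$-alternating $4$-cycle.
\end{itemize}
Contracting each edge of $M$ yields a planar graph whose parallel classes have size at most $2$. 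The alternating-cycle restriction should then give the linear bound. If the constant comes out weaker, I would keep whatever linear bound results and feed it into the next step.

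\emph{Edges leaving $I$.} This is where planarity of $\widehat H$ and $3$-connectivity enter.
\begin{itemize}
\item Consider the bipartite subgraph of $\widehat H$ between $V(I)$ and $\partial V(H)\cup Z\cup\{w_1,\dots,w_h\}$, where $w_i$ is the vertex added in $f_i$.
\item Contract each edge of $M$ to a single vertex. A matching edge may send two edges to the same outside vertex, so I would instead count faces of length at least $4$ directly via Euler's formula, or use the bound $|E|\le 2|V|-4$ for simple planar bipartite graphs on the contracted graph and allow multiplicity $2$.
\item Lemma~\ref{lem:induced-facial-cycle} and Lemma~\ref{lem:facial-connectivity} applied to $\widehat H$ control edges near each boundary cycle $C_i$. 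Since $C_i$ is an induced face boundary and $w_i$ sees all of it, the vertices of $C_i$ behave essentially like a single outside vertex plus a path. This is what should produce a term of order $10h$ rather than anything depending on how $I$ meets $C_i$.
\end{itemize}

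Combining the two bounds with $5q\le 2e_I+e_{\mathrm{out}}$ should give an inequality of the form $q\le 2b+10h+10r-20$, after tuning the constants.

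The main obstacle is making the constants match exactly. The naive bipartite bound $e_{\mathrm{out}}\le 2(q+b+r)-4$, combined with any $e_I\ge q$, is too weak. The saving has to come from the faces of $\widehat H$ that meet $I$: an $M$-alternating structure is forbidden, so many such faces must be long. I would handle this by a discharging or face-counting argument on the whole of $\widehat H$ restricted to $V(I)\cup\partial V(H)\cup Z\cup\{w_i\}$, applying Euler's formula once. The key inequality to prove is that every face incident with an edge of $M$ has length at least $4$ or touches the outside set. Failing that, I would fall back on induction along Kotzig bridges, moving a bridge pair $x,y$ into the outside set and checking that the right-hand side changes favourably.
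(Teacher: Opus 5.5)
There is a genuine gap. The step that is supposed to produce the bound (the ``discharging or face-counting argument'') is never supplied, and the two concrete ingredients you commit to are false.

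First, $e_I\le\frac32q-1$ fails, and no bound of the strength your count needs can hold. Suppose $M=\{a_ib_i\}$ and every non-$M$ edge either joins two $a$'s or has the form $a_ib_j$ with $i<j$. Then $M$ is the unique perfect matching: an $M$-alternating cycle leaving some $b_j$ along a non-$M$ edge reaches $a_i$ with $i<j$, must continue along $a_ib_i$, and so on with strictly decreasing indices, so it never closes.

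\begin{itemize}
\item Take $a_1$ adjacent to $a_2,a_3,b_1,b_2,b_3$, together with $a_2a_3,a_2b_3,a_2b_2,a_3b_3$. This is already a planar graph with $q=6$ and $e_I=9>8$.
\item Take a stacked triangulation on the $a_i$, and place each $b_j$ in a face $a_ja_ia_{i'}$ ($i,i'<j$) created when $a_j$ was inserted. This gives planar examples with $e_I=3q-O(1)$.
\end{itemize}

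Then $2e_I\ge5q$, so $5q\le2e_I+e_{\mathrm{out}}$ carries no information. The degree hypothesis only bites at vertices of small degree in $I$ (such as the $b_j$), and a global degree sum cannot see this.

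Your ``key inequality'' is also false. A triangle such as $a_1a_2b_2$ contains an $M$-edge, lies entirely in $I$, and can be a face of $H$. The true statement holds only for a Kotzig bridge $uv$ of $I$, because a face at $uv$ whose boundary lies in $I$ would put $uv$ on a cycle of $I$.

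Your fallback does not close either. Putting the two ends of a bridge into $Z$ lowers $q$ by $2$ but raises $10r$ by $20$. Induction then only gives $q\le2b+10h+10r+2$, which misses by $22$.

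The missing idea is the paper's, and it runs as follows.
\begin{itemize}
\item Triangulate each face of $H$ other than $f_1,\dots,f_h$ that meets $\partial V(H)\cup Z$. This is allowed by Lemma~\ref{lem:induced-facial-cycle} applied to $\widehat H$.
\item The two faces at a Kotzig bridge $uv\in M$ are then triangles $uvx,uvy$ with $x\ne y$ in $\partial V(H)\cup Z$ (by Lemma~\ref{lem:facial-connectivity}).
\item Delete $uv$ and split whichever of $x,y$ lie on some $C_k$, so that $xuyvx$ bounds a \emph{new selected face}. The bridge ends thus enter $\partial V$, whose coefficient is $2$, instead of $Z$, whose coefficient is $10$.
\item One must verify that each $\widehat H_i$ is again simple and 3-connected with vertex-disjoint boundary cycles, and that vertices of $I_i$ keep degree at least $5$.
\item Euler's formula gives $\sum_i b_i=b+4$ and $\sum_i(h_i+r_i-2)=h+r-3$. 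Hence $q=2+\sum_i q_i\le 2+2(b+4)+10(h+r-3)=2b+10h+10r-20$.
\end{itemize}

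This is exactly why the lemma is stated for arbitrary $h$ with the auxiliary graph $\widehat H$: the case $h=0$ is not closed under the induction. Your proposal does not identify this mechanism.
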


\begin{proof}
We use strong induction on the positive even integer $q$.
Suppose first that $q=2$, so $I$ consists of an edge $uv$.
Let $G$ be the bipartite graph whose two parts are
$\partial V(H)\cup Z$ and the set consisting of $u,v$
and the $h$ vertices added in the construction of $\widehat H$.
Its edge set consists of all edges of $\widehat H$
joining these two parts.
Then $G$ is simple and planar, with $b+r+h+2$ vertices.
The $h$ added vertices are joined to $\partial V(H)$
by exactly $b$ edges, since each boundary vertex is adjacent
to exactly one of them.
Since $u,v$ have degree at least five in $H$ and $uv$
is the only edge of $I$, each has at least four neighbors
in $\partial V(H)\cup Z$.
Thus $|E(G)|\ge b+8$.
The planar bipartite edge bound gives
$b+8\le2(b+r+h+2)-4$, so $b+2h+2r\ge8$.
If $h=r=0$, then $H=I$ consists of the edge $uv$,
and both vertices have degree one, a contradiction.
Hence $h+r\ge1$, and therefore
$2b+10h+10r-20
=2(b+2h+2r)+6(h+r)-20
\ge16+6-20=2=q$.

Now suppose that $q\ge4$.
Every face of $H$ other than $f_1,\ldots,f_h$ is also
a face of $\widehat H$.
Since $\widehat H$ is simple and 3-connected,
Lemma~\ref{lem:induced-facial-cycle} implies that
the boundary of every face of $\widehat H$ is a cycle.
Consider each face $f$ of $H$ other than $f_1,\ldots,f_h$.
If the boundary of $f$ has length at least four and
contains a vertex of $\partial V(H)\cup Z$, choose
$v_1\in\partial V(H)\cup Z$ on the boundary of $f$
and write the boundary cycle as $v_1v_2\cdots v_\ell v_1$.
For $3\le i\le\ell-1$, the edge $v_1v_i$ does not
belong to the boundary cycle of $f$.
By Lemma~\ref{lem:induced-facial-cycle} applied to
$\widehat H$, we have $v_1v_i\notin E(\widehat H)$.
Since $E(H)\subseteq E(\widehat H)$, we also have
$v_1v_i\notin E(H)$.
Add the edges $v_1v_i$ for $3\le i\le\ell-1$
inside $f$ without crossings.
The face $f$ is divided into triangular faces with
boundary cycles $v_1v_iv_{i+1}v_1$, where $2\le i\le\ell-1$.


Let $M$ be the unique perfect matching of $I$.
Each component of $I$ has a unique perfect matching, so
Lemma~\ref{lem:kotzig} gives a bridge $uv\in M$ of $I$.
If $uv$ lies on the boundary of $f_i$ for some $i$,
then $u,v\in V(C_i)\subseteq\partial V(H)$.
This is impossible, since $u,v\in V(I)$.
Hence neither face of $H$ incident with $uv$ is among
$f_1,\ldots,f_h$.
Let $f$ be a face of $H$ incident with $uv$.
Since $f$ is also a face of $\widehat H$,
Lemma~\ref{lem:induced-facial-cycle} applied to $\widehat H$
implies that the boundary of $f$ is a cycle.
Suppose that all vertices on the boundary of $f$
belong to $I$.
Since $I=H-(\partial V(H)\cup Z)$, all edges on
the boundary of $f$ also belong to $I$.
Thus the boundary of $f$ is a cycle of $I$ containing $uv$,
contrary to the fact that $uv$ is a bridge of $I$.
Therefore, the boundary of $f$ contains a vertex of
$\partial V(H)\cup Z$.
By the preceding edge additions, the two faces of $H$
incident with $uv$ are triangular.
Denote the boundary triangles by $uvx$ and $uvy$.
Each triangle contains a vertex of $\partial V(H)\cup Z$.
Since $u,v\in V(I)$, we have $x,y\in\partial V(H)\cup Z$.
If $x=y$, the two distinct faces of $\widehat H$
bounded by $uvx$ and $uvy$ have all three boundary edges
in common, contrary to Lemma~\ref{lem:facial-connectivity}.
Hence $x\ne y$.

Let $G$ be the plane graph obtained from $H$ by deleting
the edge $uv$, keeping the vertices $u,v$.
The two faces bounded by $uvx$ and $uvy$ become one face
of $G$ bounded by $xuyvx$.
Split the vertices of $\{x,y\}\cap\partial V(H)$,
with $x$ before $y$ if $x,y\in\partial V(H)$.
For each $w\in\{x,y\}\cap\partial V(H)$, let $C_k$
be the boundary cycle containing $w$ in $H$.
Use the order of the edges incident with $w$ in the
drawing of $H$, with the two edges of $C_k$ incident
with $w$ first and last.
Write $N_G(w)=\{w_1,\ldots,w_d\}$ according to this order.
Since $uvw$ bounds a face of $H$, the edges $wu,wv$
have no other edge between them along that face.
Choose the direction so that $w_j=u$ and $w_{j+1}=v$
for some $1\le j<d$.
Let $G'$ be the plane graph obtained from $G$
by replacing $w$ with two nonadjacent vertices $w_u,w_v$,
where $N_{G'}(w_u)=\{w_1,\ldots,w_j\}$ and
$N_{G'}(w_v)=\{w_{j+1},\ldots,w_d\}$;
see Figure~\ref{fig:boundary-step}(b).
Keep the order $w_1,\ldots,w_j$ around $w_u$ and
$w_{j+1},\ldots,w_d$ around $w_v$.
At every vertex of $G$ other than $w$,
keep the order of the incident edges unchanged.
All edges of $G$ not incident with $w$ remain unchanged.
After each split, replace $G$ by $G'$.
After splitting every vertex of
$\{x,y\}\cap\partial V(H)$, put $H'=G$.

\begin{figure}[!htbp]
\centering
\input{figures/lemma_boundary_schematic_GH.tex}
\caption{A schematic illustration of the operation in
Lemma~\ref{lem:boundary-unique} for $x,y\in V(C_k)$ and $a=2$.
Panels (a), (c), and (d) show the relevant parts of $H,H_1,H_2$,
respectively; ellipses indicate omitted vertices and edges.
Panel (b) shows the neighbors of $w$ before and after splitting,
with $w_j=u$ and $w_{j+1}=v$.
Gray lines belong to the boundary cycles of $f_k,f_{1,1},f_{2,1}$.
The edge $uv\in M$ is thick, and the faces $uvx,uvy$ are shaded.
The internal structures of $I_1,I_2$ are omitted.}
\label{fig:boundary-step}
\end{figure}


For each $w\in\{x,y\}\cap\partial V(H)$,
the vertices $w_u,w_v$ are adjacent to $u,v$, respectively.
Since $H$ is connected, every component of $H'$
contains $u$ or $v$.
Let $H_1,\ldots,H_a$ be the components of $H'$,
where $1\le a\le2$.
After splitting each $w\in\{x,y\}\cap\partial V(H)$
into $w_u,w_v$, the edges of $C_1,\ldots,C_h$ and
$xuyvx$ form pairwise vertex-disjoint cycles.
These cycles bound faces of the graphs $H_i$.
For each $1\le i\le a$, denote these faces in $H_i$
by $f_{i,1},\ldots,f_{i,h_i}$;
see Figure~\ref{fig:boundary-step}(c) and (d).
Let $\partial V(H_i)$ be the set of vertices on the
boundaries of $f_{i,1},\ldots,f_{i,h_i}$.
Put $Z_i=(Z\setminus\{x,y\})\cap V(H_i)$ and
$I_i=H_i-(\partial V(H_i)\cup Z_i)$.
Write $b_i=|\partial V(H_i)|$, $r_i=|Z_i|$,
and $q_i=|V(I_i)|$.
Let $\widehat H_i$ be the plane graph obtained from $H_i$
by adding one vertex in each face $f_{i,j}$
and joining it to every vertex of its boundary,
with all new edges drawn inside $f_{i,j}$.

\Needspace{8\baselineskip}
Each $H_i$ contains $u$ or $v$, and both vertices
belong to the boundaries of the faces $f_{i,j}$.
Hence $h_i\ge1$.
Since $d_{H'}(u)=d_H(u)-1\ge4$ and
$d_{H'}(v)=d_H(v)-1\ge4$,
we also have $|V(H_i)|\ge5$.
The graphs $I_1,\ldots,I_a$ are pairwise vertex-disjoint,
and $\bigcup_{i=1}^{a}I_i=I-u-v$.
Hence $\sum_{i=1}^{a}q_i=q-2$.
For each $i$, the set $M\cap E(I_i)$ is a perfect matching of $I_i$.
If $I_i$ has another perfect matching, replacing $M\cap E(I_i)$
by that matching gives a second perfect matching of $I$.
Therefore, each $I_i$ has a unique perfect matching.

\FloatBarrier
\Needspace{6\baselineskip}

\begin{Cla}\label{claim:boundary-connectivity}
	For each $i$, the graph $\widehat H_i$ is simple and 3-connected.
\end{Cla}

\begin{proof}
	By construction, $\widehat H_i$ is a simple connected plane
	graph with at least five vertices.
	For each face of $H_i$ other than $f_{i,1},\ldots,f_{i,h_i}$,
	its boundary is obtained from a face boundary of $\widehat H$
	by splitting vertices.
	At each vertex $w$ that is split, the two edges of that
	boundary incident with $w$ are both attached to $w_u$
	or both to $w_v$.
	Thus, by Lemma~\ref{lem:induced-facial-cycle} applied to
	$\widehat H$, the boundary in $H_i$ is a cycle $C$,
	and among the vertices of $C$, the only edges of $H_i$
	are the edges of $C$.
	All other faces of $\widehat H_i$ are triangular.

	Consider two distinct faces of $H_i$ other than
	$f_{i,1},\ldots,f_{i,h_i}$.
	By Lemma~\ref{lem:facial-connectivity} applied to $\widehat H$,
	before splitting, their boundaries have at most one common
	vertex or have only an edge and its two incident vertices
	in common.
	Splitting does not increase the number of common vertices,
	and every common boundary edge remains on both boundaries.

	Let $z$ be the vertex added inside $f_{i,j}$.
	Each triangular face inside $f_{i,j}$ has boundary
	$zv_1v_2z$ for an edge $v_1v_2$ on the boundary of $f_{i,j}$.
	For two distinct triangular faces inside $f_{i,j}$,
	if the two boundary edges have no common vertex,
	the triangles have only $z$ in common.
	If the two boundary edges have a common vertex $v_1$,
	the triangles have only the edge $zv_1$ and its two
	incident vertices in common.
	Triangular faces inside different faces $f_{i,j}$
	have no common vertex, since the boundary cycles are
	pairwise vertex-disjoint and the added vertices are distinct.

	Finally, let $C$ bound a face of $H_i$ other than
	$f_{i,1},\ldots,f_{i,h_i}$, and let $zv_1v_2z$
	bound a triangular face with added vertex $z$.
	Since $z\notin V(H_i)$, the only possible common
	vertices are $v_1,v_2$.
	If both belong to $C$, then $v_1v_2\in E(C)$,
	since $v_1v_2\in E(H_i)$ and the only edges of $H_i$
	among the vertices of $C$ are the edges of $C$.
	Hence the two boundaries have at most one common vertex
	or have only the edge $v_1v_2$ and its two incident
	vertices in common.
	Therefore, Lemma~\ref{lem:facial-connectivity}
	implies that $\widehat H_i$ is 3-connected.
\end{proof}

\Needspace{6\baselineskip}
\begin{Cla}\label{claim:boundary-degree}
For each $i$, every vertex of $I_i$ has degree at least five in $H_i$.
\end{Cla}
\begin{proof}
Let $v_0\in V(I_i)$.
Since $v_0\in V(I)\setminus\{u,v\}$ and $x,y\notin V(I)$,
$v_0$ is neither split nor incident with the deleted edge $uv$.
Hence $d_{H_i}(v_0)=d_{H'}(v_0)=d_H(v_0)\ge5$.
\end{proof}

If $q_i>0$, then $I_i$ has a unique perfect matching
and $q_i\le q-2<q$.
By Claims~\ref{claim:boundary-connectivity}
and~\ref{claim:boundary-degree}, the induction hypothesis
gives $q_i\le2b_i+10h_i+10r_i-20$.
If $q_i=0$, then $b_i+r_i\ge5$ and $h_i\ge1$, so
$2b_i+10h_i+10r_i-20\ge0=q_i$.

\Needspace{10\baselineskip}
Put $s=|\{x,y\}\cap\partial V(H)|$.
By construction, $\sum_{i=1}^{a}b_i=b+s+(2-s)+2=b+4$
and $\sum_{i=1}^{a}r_i=r-(2-s)=r+s-2$.
The graph $H'$ has $|V(H)|+s$ vertices and $|E(H)|-1$ edges.
The faces of the graphs $H_i$ other than $f_{i,j}$
are precisely the faces of $H$ other than
$f_1,\ldots,f_h$ and the two faces bounded by $uvx,uvy$.
Applying Euler's formula to $H$ and each $H_i$ gives
$(|E(H)|-1)-(|V(H)|+s)+2a-\sum_{i=1}^{a}h_i
=|E(H)|-|V(H)|-h$.
Hence $\sum_{i=1}^{a}h_i=h-s+2a-1$, and therefore
$\sum_{i=1}^{a}(h_i+r_i-2)=h+r-3$.
Consequently,
$q=2+\sum_{i=1}^{a}q_i
\le2+2\sum_{i=1}^{a}b_i
+10\sum_{i=1}^{a}(h_i+r_i-2)
=2+2(b+4)+10(h+r-3)
=2b+10h+10r-20$.
This completes the induction.
\end{proof}

Taking $h=0$ gives the following consequence.

\begin{Pro}\label{pro:unique-deletion-bound}
Let $G$ be a simple planar 3-connected graph with $\delta(G)\ge5$.
If $W\subseteq V(G)$ and $G-W$ has a unique perfect matching,
then $|V(G)|\le11|W|-20$.
In particular, if $|W|\le k$, then $|V(G)|\le11k-20$.
\end{Pro}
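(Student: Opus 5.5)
We apply Lemma~\ref{lem:boundary-unique} with $H=G$ (fixing a plane embedding of $G$), $h=0$ and $Z=W$.
Then $\partial V(H)=\varnothing$, $b=0$, $\widehat H=H=G$, and $r=|W|$.
The graph $I=G-W$ has a unique perfect matching by hypothesis, and $q=|V(G)|-|W|$.
The hypotheses of Lemma~\ref{lem:boundary-unique} hold as follows.
First, $\widehat H=G$ is simple and 3-connected.
Second, every vertex of $I$ has degree at least five in $H=G$, since $\delta(G)\ge5$.
Third, we need $I$ to have at least two vertices.
Once these are checked, the lemma gives $q\le 10r-20$, so $|V(G)|=q+r\le 11|W|-20$.

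The only case needing separate treatment is $q=0$, that is, $W=V(G)$, when $I$ is the empty graph.
Since $I$ has a perfect matching, $q$ is even, so $q=1$ cannot occur.
For $q=0$ we need $|V(G)|\le 11|V(G)|-20$, i.e.\ $|V(G)|\ge2$.
This holds because $\delta(G)\ge5$ forces $|V(G)|\ge6$.

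For the ``in particular'' statement, if $|W|\le k$ then $|V(G)|\le 11|W|-20\le 11k-20$, since the bound is monotone in $|W|$.

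There is no real obstacle here. The only points to check are that the face data in the lemma may be taken vacuous when $h=0$, and that $Z=W$ is allowed, which it is because $Z\subseteq V(H)\setminus\partial V(H)=V(G)$.
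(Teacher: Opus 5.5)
Your proposal is correct and follows the paper's proof essentially verbatim. Like the paper, you specialize Lemma~\ref{lem:boundary-unique} to $H=G$, $h=b=0$, $Z=W$, and treat the case $W=V(G)$ separately. The only difference is that the paper gets $|V(G)|\ge4$ from 3-connectivity, where you get $|V(G)|\ge6$ from $\delta(G)\ge5$; either is enough.
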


\begin{proof}
Since $G$ is 3-connected, $|V(G)|\ge4$.
If $G-W$ is empty, then $W=V(G)$, so
$|V(G)|=|W|\le11|W|-20$.
Otherwise, fix a plane drawing of $G$ and take
$H=G$, $h=b=0$, and $Z=W$.
Then $\widehat H=G$ and $I=G-W$.
The graph $I$ is nonempty and has a unique perfect matching,
so $|V(I)|\ge2$.
Lemma~\ref{lem:boundary-unique} gives
$|V(G)|-|W|\le10|W|-20$,
and hence $|V(G)|\le11|W|-20$.
\end{proof}


\begin{samepage}
\subsection{\texorpdfstring{Proofs of Theorem~\ref{thm:degree-five} and
Corollary~\ref{cor:finite-family}}{Proofs of Theorem 2 and Corollary 3}}

\begin{proof}[Proof of Theorem~\ref{thm:degree-five}]
Put $p=\Phi(G)\ge1$. By Lemma~\ref{lem:halving}, there is
a set $W\subseteq V(G)$ with
$|W|\le2\lfloor\log_2p\rfloor$ such that $G-W$ has a
unique perfect matching. Proposition~\ref{pro:unique-deletion-bound}
therefore yields $|V(G)|\le11|W|-20
\le22\lfloor\log_2p\rfloor-20$.
Since $\lfloor\log_2p\rfloor$ is an integer, this implies
$\lfloor\log_2p\rfloor\ge\lceil(|V(G)|+20)/22\rceil$,
and hence $p\ge2^{\lceil(|V(G)|+20)/22\rceil}$.
\end{proof}

\begin{proof}[Proof of Corollary~\ref{cor:finite-family}]
Every such graph satisfies
$|V(G)|\le22\lfloor\log_2c\rfloor-20$ by
Theorem~\ref{thm:degree-five}. There are only finitely many
isomorphism classes of simple graphs of bounded order.
\end{proof}
\end{samepage}


\end{document}

%% file: figures/lemma11_selected_faces_GH.tex
\begin{tikzpicture}[
 x=.95cm,y=.95cm,
 every node/.style={font=\small},
 vertex/.style={circle,draw,fill=white,inner sep=0pt,minimum size=4.5pt},
 edge/.style={line width=.55pt},
 boundary/.style={draw=black!50,line width=1.1pt},
 added/.style={draw=black!65,dashed,line width=.55pt}]
\foreach \offset/\hatflag in {0/0,5.8/1} {
 \begin{scope}[shift={(\offset,0)}]
 \coordinate (u) at (-2,-1.7);
 \coordinate (v) at (2,-1.7);
 \coordinate (x) at (0,2.2);
 \coordinate (a) at (-.6667,-.0594);
 \coordinate (b) at (-.6667,-.6926);
 \coordinate (y) at (0,-.9661);
 \coordinate (c) at (.6667,-.6926);
 \coordinate (d) at (.6667,-.0594);
 \coordinate (w) at (0,-.045);
 \draw[boundary] (x)--(a)--(b)--(y)--(c)--(d)--cycle;
 \foreach \p in {x,a,b,y} {\draw[edge] (u)--(\p);}
 \foreach \p in {x,y,c,d} {\draw[edge] (v)--(\p);}
 \draw[line width=1.5pt] (u)--(v);
 \ifnum\hatflag=1\relax
   \foreach \p in {x,a,b,y,c,d} {\draw[added] (w)--(\p);}
   \node[vertex] at (w) {};
   \node[above right=1pt] at (w) {$w$};
 \fi
 \foreach \p in {x,a,b,y,c,d,u,v} {\node[vertex] at (\p) {};}
 \node[above] at (x) {$x$};
 \node[above left=1pt] at (a) {$a$};
 \node[left=1pt] at (b) {$b$};
 \node[below=1pt] at (y) {$y$};
 \node[right=1pt] at (c) {$c$};
 \node[above right=1pt] at (d) {$d$};
 \node[below left=1pt] at (u) {$u$};
 \node[below right=1pt] at (v) {$v$};
 \ifnum\hatflag=1\relax
   \node at (0,-2.5) {(b) $\widehat H$};
 \else
   \node at (0,-2.5) {(a) $H$};
 \fi
 \end{scope}
}
\begin{scope}[shift={(11,0)}]
\coordinate (u) at (-.8,0);
\coordinate (v) at (.8,0);
\draw[line width=1.5pt] (u)--(v);
\node[vertex] at (u) {};
\node[vertex] at (v) {};
\node[below] at (u) {$u$};
\node[below] at (v) {$v$};
\node at (0,-2.5) {(c) $I$};
\end{scope}
\end{tikzpicture}

%% file: figures/lemma_boundary_schematic_GH.tex
\begingroup
\def\schematicBeforePositions{w/0/2.8,w1/-3.05/1,u/-.85/1,v/.85/1,wd/3.05/1}
\def\schematicBeforeBoundary{w/w1,w/wd}
\def\schematicBeforeEdges{w/u,w/v}
\def\schematicAfterPositions{wu/-1.65/-.75,wv/1.65/-.75,w1/-3.05/-2.25,u/-.85/-2.25,v/.85/-2.25,wd/3.05/-2.25}
\def\schematicAfterBoundary{wu/w1,wv/wd}
\def\schematicAfterEdges{wu/u,wv/v}
\begin{tikzpicture}[
 x=1cm,y=.8cm,
 every node/.style={font=\small},
 vertex/.style={circle,draw,fill=white,inner sep=0pt,minimum size=5pt},
 split vertex/.style={vertex,line width=1pt},
 edge/.style={line width=.55pt},
 boundary/.style={draw=black!48,line width=1.2pt},
 matching/.style={line width=1.55pt}]

\begin{scope}
 \coordinate (x) at (0,2.6);
 \coordinate (y) at (0,-2.6);
 \coordinate (u) at (-.65,0);
 \coordinate (v) at (.65,0);
 \fill[black!9] (x)--(u)--(y)--(v)--cycle;
 \draw[edge] (x)--(u)--(y)--(v)--(x);
 \draw[matching] (u)--(v);
 \draw[boundary] (x)--(-2.15,1.4)--(-2.65,.45);
 \draw[boundary] (-2.65,-.45)--(-2.15,-1.4)--(y);
 \draw[boundary] (x)--(2.15,1.4)--(2.65,.45);
 \draw[boundary] (2.65,-.45)--(2.15,-1.4)--(y);
 \foreach \xx in {-2.65,2.65} {\node at (\xx,0) {$\vdots$};}
 \foreach \xx in {-2.15,2.15} {
  \foreach \yy in {-1.4,1.4} {\node[vertex] at (\xx,\yy) {};}
 }
 \foreach \yy in {-1,1} {
  \draw[edge] (u)--(-1.65,\yy);
  \draw[edge] (v)--(1.65,\yy);
  \node[vertex] at (-1.65,\yy) {};
  \node[vertex] at (1.65,\yy) {};
 }
 \node at (-1.65,0) {$\vdots$};
 \node at (1.65,0) {$\vdots$};
 \node[split vertex] at (x) {};
 \node[split vertex] at (y) {};
 \node[vertex] at (u) {};
 \node[vertex] at (v) {};
 \node[above=2pt] at (x) {$x$};
 \node[below=2pt] at (y) {$y$};
 \node[above right=2pt] at (u) {$u$};
 \node[above left=2pt] at (v) {$v$};
 \node at (-2.05,2.75) {$f_k$};
 \node[left=2pt] at (-2.45,1.2) {$C_k$};
 \node at (0,-3.45) {(a) $H$};
\end{scope}

\begin{scope}[shift={(7.8,0)}]
 \foreach \name/\xx/\yy in \schematicBeforePositions {
  \coordinate (\name) at (\xx,\yy);
 }
 \foreach \aa/\bb in \schematicBeforeBoundary {\draw[boundary] (\aa)--(\bb);}
 \foreach \aa/\bb in \schematicBeforeEdges {\draw[edge] (\aa)--(\bb);}
 \foreach \name/\xx/\yy in \schematicBeforePositions {\node[vertex] at (\name) {};}
 \node[split vertex] at (w) {};
 \node[above=2pt] at (w) {$w$};
 \node at (-1.65,2.7) {$f_k$};
 \node[below=3pt] at (w1) {$w_1$};
 \node[below=3pt] at (u) {$w_j=u$};
 \node[below=3pt] at (v) {$w_{j+1}=v$};
 \node[below=3pt] at (wd) {$w_d$};
 \node at (-1.95,1) {$\cdots$};
 \node at (1.95,1) {$\cdots$};
 \node at (0,-.05) {$\Downarrow$};
 \foreach \name/\xx/\yy in \schematicAfterPositions {
  \coordinate (\name) at (\xx,\yy);
 }
 \foreach \aa/\bb in \schematicAfterBoundary {\draw[boundary] (\aa)--(\bb);}
 \foreach \aa/\bb in \schematicAfterEdges {\draw[edge] (\aa)--(\bb);}
 \foreach \name/\xx/\yy in \schematicAfterPositions {\node[vertex] at (\name) {};}
 \node[split vertex] at (wu) {};
 \node[split vertex] at (wv) {};
 \node[above=2pt] at (wu) {$w_u$};
 \node[above=2pt] at (wv) {$w_v$};
 \node[below=3pt] at (w1) {$w_1$};
 \node[below=3pt] at (u) {$w_j=u$};
 \node[below=3pt] at (v) {$w_{j+1}=v$};
 \node[below=3pt] at (wd) {$w_d$};
 \node at (-1.95,-2.25) {$\cdots$};
 \node at (1.95,-2.25) {$\cdots$};
 \node at (0,-3.45) {(b) Splitting $w$};
\end{scope}

\begin{scope}[shift={(.55,-7.25)}]
 \coordinate (xu) at (0,2.6);
 \coordinate (yu) at (0,-2.6);
 \coordinate (u) at (.7,0);
 \draw[boundary] (xu)--(-2.15,1.4)--(-2.65,.45);
 \draw[boundary] (-2.65,-.45)--(-2.15,-1.4)--(yu);
 \draw[boundary] (xu)--(u)--(yu);
 \node at (-2.65,0) {$\vdots$};
 \foreach \yy in {-1.4,1.4} {\node[vertex] at (-2.15,\yy) {};}
 \foreach \yy in {-1,1} {
  \draw[edge] (u)--(-.85,\yy);
  \node[vertex] at (-.85,\yy) {};
 }
 \node at (-.85,0) {$\vdots$};
 \node at (-1.6,0) {$I_1$};
 \node[split vertex] at (xu) {};
 \node[split vertex] at (yu) {};
 \node[vertex] at (u) {};
 \node[above=2pt] at (xu) {$x_u$};
 \node[below=2pt] at (yu) {$y_u$};
 \node[right=2pt] at (u) {$u$};
 \node at (1.05,1.5) {$f_{1,1}$};
 \node at (-.55,-3.45) {(c) $H_1$};
\end{scope}

\begin{scope}[shift={(7.25,-7.25)}]
 \coordinate (xv) at (0,2.6);
 \coordinate (yv) at (0,-2.6);
 \coordinate (v) at (-.7,0);
 \draw[boundary] (xv)--(2.15,1.4)--(2.65,.45);
 \draw[boundary] (2.65,-.45)--(2.15,-1.4)--(yv);
 \draw[boundary] (xv)--(v)--(yv);
 \node at (2.65,0) {$\vdots$};
 \foreach \yy in {-1.4,1.4} {\node[vertex] at (2.15,\yy) {};}
 \foreach \yy in {-1,1} {
  \draw[edge] (v)--(.85,\yy);
  \node[vertex] at (.85,\yy) {};
 }
 \node at (.85,0) {$\vdots$};
 \node at (1.6,0) {$I_2$};
 \node[split vertex] at (xv) {};
 \node[split vertex] at (yv) {};
 \node[vertex] at (v) {};
 \node[above=2pt] at (xv) {$x_v$};
 \node[below=2pt] at (yv) {$y_v$};
 \node[left=2pt] at (v) {$v$};
 \node at (-1.05,1.5) {$f_{2,1}$};
 \node at (.55,-3.45) {(d) $H_2$};
\end{scope}
\node at (3.9,-11.6) {$H'=H_1\cup H_2$};
\end{tikzpicture}
\endgroup